\tikzstyle{arco}=[thick, -to]
\tikzstyle{main node}=[draw,circle,inner sep=1,outer sep=2,thick,minimum size=12pt]
\tikzstyle{pnode} =[fill,circle,minimum size =3pt, outer sep=2pt, inner sep = 0pt]
\newcommand{\bucle}[3][]{\draw[arco, #1](#2.#3-25)..controls +(#3-40:0.8) and +(#3+40:0.8).. (#2.#3+25);}
\newtheorem{definition}{Definition}
\newtheorem{proposition}{Proposition\setcounter{claimcounter}{0}}
\newtheorem{lemma}{Lemma\setcounter{claimcounter}{0}}
\newtheorem{theorem}{Theorem\setcounter{claimcounter}{0}}
\newcounter{claimcounter}
\newtheorem*{claim*}{{\it Claim}}
\newcommand{\fixe}{\text{\sc fix}}
\newcommand{\calP}{\mathcal{P}}
\newcommand{\calK}{K^*}
\newcommand{\B}{\{0,1\}}
\title{Number of fixed points and disjoint cycles\\ in monotone Boolean networks}
\author{
Julio Aracena\footnote{CI$^2$MA and Departamento de Ingenier\'ia Matem\'atica, Universidad de Concepci\'on, Av. Esteban Iturra s/n, Casilla 160-C, Concepci\'on, Chile. Email: \tt{jaracena@dim.uchile.cl}}
\footnote{Partially supported by FONDECYT project 1151265, Labex UCN@Sophia from the Universit\'e C\^ote d'Azur, France, BASAL project CMM, Universidad de Chile and by Centro de Investigaci\'on en Ingenier\'ia Matem\'atica (CI$^2$MA), Universidad de Concepci\'on.}
\and 
Adrien Richard\footnote{Laboratoire I3S, UMR CNRS 7271 \& Universit\'e de Nice-Sophia Antipolis, France. Email: \tt{richard@unice.fr}}~\footnote{Corresponding author.} 
\footnote{Partially supported by CNRS project PICS06718 and the project PACA
APEX FRI.}
\and 
Lilian Salinas\footnote{Department of Computer Sciences and CI$^2$MA, University of Concepci\'on, Edmundo Larenas 215, Piso 3, Concepci\'on, Chile \tt{lilisalinas@udec.cl}}
\footnote{Partially supported by FONDECYT project 1151265.}
}
\date{February 9, 2016; Revised April 5, 2017}
\begin{document}

\maketitle

\begin{abstract}
Given a digraph $G$, a lot of attention has been deserven on the maximum number $\phi(G)$ of fixed points in a Boolean network $f:\B^n\to\B^n$ with $G$ as interaction graph. In particular, a central problem in network coding consists in studying the optimality of the feedback bound $\phi(G)\leq 2^{\tau}$, where $\tau$ is the minimum size of a feedback vertex set of $G$. In this paper, we study the maximum number $\phi_m(G)$ of fixed points in a  {\em monotone} Boolean network with interaction graph $G$. We establish new upper and lower bounds on $\phi_m(G)$ that depends on the cycle structure of~$G$. In addition to $\tau$, the involved parameters are the maximum number $\nu$ of vertex-disjoint cycles, and the maximum number $\nu^*$ of vertex-disjoint cycles verifying some additional technical conditions. We improve the feedback bound $2^\tau$ by proving that $\phi_m(G)$ is at most the largest sub-lattice of $\B^\tau$ without chain of size $\nu+2$, and without another forbidden pattern described by two disjoint antichains of size $\nu^*+1$. Then, we prove two optimal lower bounds: $\phi_m(G)\geq \nu+1$ and $\phi_m(G)\geq 2^{\nu^*}$. As a consequence, we get the following characterization: $\phi_m(G)=2^\tau$ if and only if $\nu^*=\tau$. As another consequence, we get that if $c$ is the maximum length of a chordless cycle of $G$ then $2^{\nu/3^c}\leq\phi_m(G)\leq 2^{c\nu}$. Finally, with  the techniques introduced, we establish an upper bound on the number of fixed points of any Boolean network according to its signed interaction graph. 
\end{abstract}

\section{Introduction}\label{intro}

A {\em Boolean network} with $n$ components is a discrete dynamical system usually defined by a global transition function
\[
f:\B^n\to\B^n,\qquad x=(x_1,\dots,x_n)\mapsto f(x)=(f_1(x),\dots,f_n(x)).
\]
Boolean networks have many applications. In particular, since the seminal papers of McCulloch and Pitts \cite{MP43}, Hopfield \cite{H82}, Kauffman \cite{K69,K93} and Thomas \cite{T73,TA90}, they are omnipresent in the modeling of neural and gene networks (see \cite{B08,N15} for reviews). They are also essential tools in information theory, for the network coding problem \cite{ANLY00,GRF16}. 

\medskip
The structure of a Boolean network $f$ is usually represented via its {\em interaction graph}, which is the digraph $G$ with vertex set $\{1,\dots,n\}$ that contains an arc $uv$ if $f_v$ depends on $x_u$ ($G$ may have {\em loops}, that is, arcs from a vertex to itself).  

\medskip
In many contexts, as in molecular biology, the first reliable informations are represented under the form of an interaction graph, while the actual dynamics are very difficult to observe \cite{TK01,N15}. A natural question is then the following: {\em What can be said about  $f$ according to $G$ only?} Among the many dynamical properties that can be studied, fixed points are of special interest, since they correspond to stable states and often have a strong meaning. For instance, in the context of gene networks, they correspond to stable patterns of gene expression at the basis of particular biological processes \cite{TA90,A04}. As such, they are arguably the property which has been the most thoroughly studied. The study of the number of fixed points, and its maximization in particular, is the subject of a stream of work, e.g. in \cite{R86,RRT08,A08,GR11,ARS14,GRR15}. 

\medskip
Given a digraph $G$, let $\phi(G)$ be the maximum number of fixed points in a Boolean network whose interaction graph is (isomorphic to) $G$, and let $\phi'(G)$ be the maximum number of fixed points in a Boolean network whose interaction graph is (isomorphic to) a subgraph of $G$. A fundamental result is the following ``classical'' upper bound, called {\em feedback bound} in what follows, proved independently in different contexts in \cite{R07,A08}:
\[
\phi(G)\leq\phi'(G)\leq 2^\tau
\]
where $\tau=\tau(G)$ is the minimum size of a {\em feedback vertex set} of $G$, that is, the minimum size of a set of vertices intersecting every cycle of $G$ (cycles are always directed and without ``repeated vertices''). The optimality of this bound is a central problem in network coding: the so called {\em binary network coding problem} consists in deciding if $\phi'(G)=2^{\tau}$ \cite{R07,GR11}. Concerning lower bounds, let us mention the following ``folklore'' lower bound 
\[
2^\nu\leq \phi'(G)
\]
where $\nu=\nu(G)$ is the maximum number of vertex-disjoint cycles in $G$ (we have, obviously, $\nu\leq\tau$). This bound comes from the rather simple observation that if $G'$ is a subgraph of $G$ that consists in $k$ vertex-disjoint cycles, then $2^k=\phi(G')\leq \phi'(G)$. Up to our knowledge, lower bounds on $\phi(G)$ are known only for special class of  digraphs, as those with a loop on each vertex~\cite{GRF16}. See \cite{GR11} for other upper and lower bounds on $\phi'(G)$ based on other graph parameters. 

\medskip
In this paper we study fixed points in {\em monotone} Boolean networks, that is, functions $f:\B^n\to\B^n$ such that
\[
\forall x,y\in\B^n,\qquad x\leq y~\Rightarrow~ f(x)\leq f(y).
\]
Given a digraph $G$, we denote by $\phi_m(G)$ the maximum number of fixed points in a monotone Boolean network whose interaction graph is (isomorphic to) $G$, and we denote by $\phi'_m(G)$ the maximum number of fixed points in a monotone Boolean network whose interaction graph is (isomorphic to) a subgraph of $G$. Our main results, summarized below, are upper and lower bounds $\phi_m(G)$ and applications of the techniques introduced for these bounds to the more general context of signed digraphs. 

\medskip
We cannot speak about fixed points in monotone Boolean networks without first  mention the well-known Knaster-Tarski theorem, the following: {\em The set of fixed points of a monotone Boolean network $f$ is a non-empty lattice}. Using only the monotonicity of $f$, nothing else can be said on this non-empty lattice, since it is easy to see that {\em any} non-empty lattice contained in $\B^n$ may be the set of fixed points of $f$. However, one of the main contributions of this paper is to show that the interaction graph of $f$ contains useful additional information on the structure of the lattice of fixed points (and in particular on the maximal chains it contains). This is essentially from this additional information that our upper-bounds on $\phi_m(G)$ are derived.



\subsection{Results}

\subsubsection{Upper bounds}

Our first result is that, given a monotone Boolean network $f$ with interaction graph $G$, the set of  fixed points of $f$ is isomorphic to a lattice $L\subseteq \B^\tau$ without chain of size $\nu+2$. By a well known theorem of Erd\"os \cite{E45}, the maximal size of such a lattice is exactly two plus the sum of the $\nu-1$ largest binomial coefficients $\tau\choose k$. We thus obtain the following bound, which improve the feedback bound when the gap between $\nu$ and $\tau$ is large:
\[
\phi_m(G)\leq 2+ \sum_{k=\lfloor\frac{\tau-\nu+2}{2}\rfloor}^{\lfloor\frac{\tau+\nu-2}{2}\rfloor}{\tau \choose k}.
\]  
From this bound we recover the implication $\phi_m(G)=2^\tau~\Rightarrow~\nu=\tau$ already established in \cite{GRF16} with a dedicated proof. Since $2^\nu\leq \phi'_m(G)$ (this is an easy exercise), we deduce that 
\[
\phi'_m(G)=2^\tau \iff \nu=\tau.
\]
This solves the network coding problem in the binary monotone case and shows that, in this case, network coding do not outperform routing in terms of solvability; see \cite{GRF16} for details. 

\medskip
Then, we refine the upper bound by introducing another graph parameter, $\nu^*$, defined as follows. Let $C_1,\dots,C_k$ be a {\em packing} of size $k$, that is, a collection of $k$ vertex-disjoint cycles. A path $P$ is said {\em principal} if no arc and no internal vertex of $P$ belong  to the packing. We say that the packing is {\em special} if the following holds: for every cycle $C_i$ and every vertex $v$ in $C_i$, if there exists a principal path from a cycle $C_j\neq C_i$ to $v$, then there exists a principal path from $C_i$ or a source to $v$ (this path may start with $v$, and thus it may be a cycle). We then define $\nu^*=\nu^*(G)$ as the maximum size of a special packing of $G$ (we have, obviously, $\nu^*\leq \nu$). Also, we say that a subset $X\subseteq \B^n$ has a {\em special $k$-pattern} if there exists a subset $S\subseteq X$ of size $k$ such that $1-x\in X$ for all $x\in S$, and $x\leq 1-y$ for all distinct $x,y\in S$. Our refinement is based on the following fact: the set of fixed points of $f$ is isomorphic to a lattice $L\subseteq \B^\tau$ without special $(\nu^*+1)$-pattern. This has several consequences that we will discuss. In particular, since $\B^\tau$ has a $\tau$-pattern, we deduce that $\phi_m(G)=2^\tau~\Rightarrow ~\nu^*=\tau$. Since $\nu^*\leq\nu\leq\tau$, this improves the implication $\phi_m(G)=2^\tau~\Rightarrow~ \nu=\tau$ mentioned above. 

\subsubsection{Lower bounds}

Next, we prove several lower bounds by construction. The first is
\[
\nu+1\leq \phi_m(G),
\] 
which is optimal for every possible value of $\nu$. Since $2^\nu\leq \phi'_m(G)$, this optimality is not obvious a priori. Actually, we will prove that $\phi_m(G)=\nu+1$ if $G$ has an arc between any two vertex-disjoint cycles and if the maximum in-degree of $G$ is two.

\medskip
The second lower bound is
\[
2^{\nu^*}\leq \phi_m(G),
\] 
which is also optimal for every possible value of $\nu^*$ (consider for instance digraphs that consist in disjoint cycles). The proof consists in the construction of a Boolean network, in which, in some way, the principal paths of the special packing cancel each other, making the $\nu^*$ disjoint cycles independent each other. Then, they produce $2^{\nu^*}$ fixed points, as the union of $\nu^*$ disjoint cycles do (for comparison, the $2^\nu$ lower bound on $\phi'(G)$ is simply obtained by ignoring arcs that are not in the $\nu$ disjoint cycles).

\medskip
As a consequence, if $\nu^*=\tau$ then $\phi_m(G)=2^\tau$ and since, as seen above, the converse is true we have the following characterization, showing that the introduction of $\nu^*$ really makes sense:
\[
\phi_m(G)=2^\tau\iff \nu^*=\tau.
\] 
This characterization solves, in the binary monotone case, the {\em strict} network coding problem, introduced in \cite{GRF16} for studying arcs that are detrimental to network coding. In the binary case, the problem consists in deciding if $\phi(G)=2^{\tau}$. An interpretation resulting from the two above characterizations is then the following: in the monotone case, some arcs are detrimental to binary network coding if and only if $\nu^*<\nu=\tau$; see \cite{GRF16} for details. Note also that, in the context of gene networks, arcs come from experimental data and are thus usually not ignored: results concerning $\phi_m(G)$ are then more relevant than those concerning $\phi'_m(G)$.

\medskip
Finally we will prove, using both the lower bound $2^{\nu^*}$ and a result on dominating sets in digraphs, that if $G$ has $k$ vertex-disjoint cycles of length at most $\ell$ then $2^{k/3^\ell}\leq\phi_m(G)$. Hence, if $c=c(G)$ denotes the {\em circumference} of $G$, that is the maximum length of a chordless cycle of $G$, then $2^{\nu/3^c}\leq\phi_m(G)$, and since $\tau\leq c\nu$ this gives
\[
2^{\nu/3^c}\leq\phi_m(G)\leq 2^{c\nu}.
\]
Hence, if $G$ is symmetrical and loop-less, and thus identifiable with an undirected simple graph, then $2^{\nu/9}\leq\phi_m(G)$, and we will prove that the exponent $\nu/9$ can be replaced by $\nu/6$. More precisely, we will prove that $\nu/ 6\leq\nu^*$, and this immediately gives $2^{\nu/6}\leq\phi_m(G)$.

\subsubsection{Upper bound for signed digraphs}

Our motivation for studying fixed points in monotone Boolean networks comes from the studies about the relationships between the fixed points of Boolean network $f:\B^n\to\B^n$ and its {\em signed} interaction graph, which play a predominant role in biological and sociological applications \cite{TA90,TK01,H59}. 

\medskip
Formally, the {\em signed interaction graph} of $f$ is obtained by associating to its interaction graph $G$ the arc-labeling function $\sigma$ defined for all arc $uv$ as follows: 
\[
\sigma(uv)=
\left\{
\begin{array}{rl}
1  &\text{if $f_v(x)\leq f_v(x+e_u)$ for all $x\in\B^n$ with $x_u=0$},\\
-1 &\text{if $f_v(x)\geq f_v(x+e_u)$ for all $x\in\B^n$ with $x_u=0$},\\
0  &\text{otherwise.}
\end{array}
\right.
\]
(where $e_u$ denotes the point of $\B^n$ whose components are all zeros, except the one indexed by $u$). The {\em sign of a cycle} is the product of the signs of its arcs. Note that $f$ is monotone if and only if all the arcs are labeled positive. If $(G,\sigma)$ is a signed digraph then $\phi(G,\sigma)$ denotes the maximum number of fixed points in a Boolean network with a signed interaction graph isomorphic to $(G,\sigma)$. Let $\tau^+=\tau^+(G,\sigma)$ be the minimum size of a set of vertices intersecting every non-negative cycle of $(G,\sigma)$.

\medskip
The biologist Thomas put the emphasis on the dual role of positive and negative cycles, roughly:  positive cycles are the key ingredients for fixed point multiplicity, and negative cycles for some kind of sustained oscillations \cite{TA90,TK01}. One of the authors of this paper proved the following upper bound, that generalizes the classic one, and that gives a strong support to Thomas' ideas concerning positive cycles~\cite{A08}:
\[
\phi(G,\sigma)\leq 2^{\tau^+}.
\] 
The optimality of this bound is a difficult problem, and we think that it could be improved by taking, in some way, information on negative cycles. This was our initial motivation. For that problem, it is natural to study the extreme case where all cycles are positive, and this essentially corresponds to the monotone case. Indeed, as explained latter, if $(G,\sigma)$ is a strongly connected signed digraph with only positive cycles then $\phi(G,\sigma)=\phi_m(G)$ and all the previous results apply\footnote{If $(G,\sigma)$ has only positive cycles and $G$ is not strongly connected, then we may have $\phi(G,\sigma)>\phi_m(G)$. Thus the strong connectivity is necessary to ensure the equality.}.

\medskip
Actually, using tools introduced to bound $\phi_m(G)$ we will establish an upper bound that works for every signed digraph and which is competitive with $2^{\tau^+}$. Let us say that $(G,\sigma)$ and $(G,\sigma')$ are {\em equivalent} if there exists a set of vertices $I$ such that, for every arc $uv$: $\sigma'(uv)=\sigma(uv)$ if $u,v\in I$ or $u,v\not\in I$; and $\sigma'(uv)=-\sigma(uv)$ otherwise. A {\em monotone feedback vertex set} of $(G,\sigma)$ is then a feedback vertex set $I$ of $G$ such that every non-positive arc of $(G,\sigma)$ has its final vertex in $I$. Let $\tau^*_m=\tau^*_m(G,\sigma)$ the minimum size of a monotone feedback vertex set in a signed digraph equivalent to $(G,\sigma)$. We will prove that $\phi(G,\sigma)$ is at most the sum of the $\nu^++1$ largest binomial coefficients ${\tau^*_m\choose k}$, where $\nu^+$ is the maximum number of vertex-disjoint non-negative cycles in $(G,\sigma)$. In other words, for every signed digraph $(G,\sigma)$, 
\[
\phi(G,\sigma)\leq \sum_{k=\lfloor\frac{\tau^*_m-\nu^+}{2}\rfloor}^{\lfloor\frac{\tau^*_m+\nu^+}{2}\rfloor}{\tau^*_m \choose k}.
\]  

For instance, this improves the bound $2^{\tau^+}$ when $\tau^*_m=\tau^+$ and when the gap between $\nu^+$ and $\tau^*_m$ is large. This holds, for example, if $(G,\sigma)$ is the all-negative loop-less complete digraph $(K_n,-)$, that is, the loop-less signed digraph on $n$ vertices with $n^2-n$ negative arcs. In that case $\tau^*_m=\tau=\tau^+$ and $\nu^+=\lfloor\frac{\tau+1}{2}\rfloor$.

\subsection{Notations and basic definitions}

Unless otherwise specified, all the cycles and paths we consider are always directed and without repeated vertices, except that in a path $P=v_0v_1\dots v_\ell$ we may have $v_0=v_\ell$ (and, in that case, the path is actually a cycle). The vertices $v_1,\dots,v_{\ell-1}$, if they exist, are the {\em internal vertices} of~$P$. A {\em chord} in a cycle $C$ is an arc $uv$ such that $u$ an $v$ are in $C$ but not $uv$. A {\em loop} is an arc from a vertex to itself (a cycle of length one). Given a digraph $G=(V,A)$, the subgraph of $G$ induced by a set $I\subseteq V$ is denoted $G[I]$ and $G\setminus I=G[V\setminus I]$. 
A {\em strongly connected component} is a set of vertices $S\subseteq V$ such that $G[S]$ is strongly connected and such that $S$ is maximal for this property (with respect to the inclusion). The set of in-neighbors of a vertex $v$ is denoted $N^-_G(v)$ (and $v\in N^-_G(v)$ if there is a loop on $v$). The in-degree of $v$ is $|N^-_G(v)|$. A {\em source} is a vertex of in-degree zero.   

\medskip
For every positive integer $n$ we set $[n]=\{1,\dots,n\}$. We view $\B^n$ as the $n$-dimensional Boolean lattice, and every subset $X\subseteq \B^n$ is viewed as a poset with the usual partial order $\leq$ ({\em i.e.} $x\leq y$ if and only if $x_i\leq y_i$ for all $i\in [n]$). Two subsets $X,Y\subseteq \B^n$ are {\em isomorphic} if there exists a bijection from $X$ to $Y$ preserving the order. If $x\in\B^n$ then $\overline{x}=1-x$ is the negation of $x$ ({\em i.e.} $\overline{x}_i=1-x_i$ for all $i\in [n]$). If $I\subseteq [n]$ then $x_I$ is the restriction of $x$ to the components in $I$, so that $x_I\subseteq \B^I$. For every $I\subseteq [n]$ we denote by $e_I$ the point of $\B^n$ such that $(e_I)_v=1$ if and only if $v\in I$. We use $e_v$ as an abbreviation of $e_{\{v\}}$. For all $0\leq k\leq n$ we set denote by ${[n]\choose k}$ the set of $x\in\B^n$ containing exactly $k$ ones. 

\medskip
Let $f:\B^n\to\B^n$. For all $v\in [n]$, we say that the component $f_v:\B^n\to\B$ of $f$ is monotone if, for all $x,y\in\B^n$,  $x\leq y$ implies $f_v(x)\leq f_v(y)$. Thus $f$ is monotone if and only if all its components are. The main tool to study $f$ is its interaction graph $G$, whose definition is detailed here: the vertex set of $G$ is $[n]$, and for all $u,v\in[n]$, there is an arc $uv$ if $f_v$ depends on $x_u$, that is, if there exists $x,y\in\B^n$ that only differ in $x_u\neq y_u$ such that $f_v(x)\neq f_v(y)$. We denote by $\fixe(f)$ the set of fixed points of $f$. 

\subsection{Organization}

Upper and lower bounds on $\phi_m(G)$ are presented in Section~\ref{sec:upperbound} and \ref{sec:lowerbound} respectively. Results on signed digraphs are presented Section~\ref{sec:signedgraph}. Concluding remarks are given in Section~\ref{sec:conclusion}. 

\section{Upper bound for monotone networks}\label{sec:upperbound}

\subsection{Feedback bound and isomorphism}

The following lemma is a classical result proved in \cite{A08} from which we immediately deduce the feedback bound $\phi(G)\leq 2^\tau$ (it is sufficient to take $I$ such that $|I|=\tau$). Below, we will refine this lemma under some additional conditions about the monotony of the components of~$f$. Both proofs are almost identical, and to show that explicitly both proofs are presented. 

\begin{lemma}
Let $f$ be a Boolean network and let $I$ be a feedback vertex set of its interaction graph. Then
\[
\forall x,y\in\fixe(f),\qquad x= y\iff x_I= y_I.
\]
\end{lemma}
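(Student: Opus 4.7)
The forward direction is immediate, so the only content is the backward direction: assuming $x_I = y_I$ for two fixed points $x,y$, deduce $x_v = y_v$ for every $v \notin I$.

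The plan is to exploit the defining property of a feedback vertex set: the subgraph $G \setminus I$ is acyclic. Consequently it admits a topological ordering of its vertices $v_1, v_2, \dots, v_{n-|I|}$ such that every arc of $G \setminus I$ goes from some $v_i$ to some $v_j$ with $i < j$. I would then prove by induction on $i$ that $x_{v_i} = y_{v_i}$.

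For the inductive step, fix $v = v_i \notin I$ and consider $N_G^-(v)$. Each in-neighbor $u$ of $v$ is either (a) in $I$, in which case $x_u = y_u$ by the hypothesis $x_I = y_I$, or (b) in $V \setminus I$, in which case the arc $uv$ lies in $G \setminus I$ so by the topological order $u = v_j$ for some $j < i$, giving $x_u = y_u$ by the induction hypothesis. Since $f_v$ depends only on coordinates indexed by $N_G^-(v)$ (this is precisely what defines the interaction graph: if $f_v$ did depend on some other coordinate, the corresponding arc would be in $G$), we get $f_v(x) = f_v(y)$. Using that $x$ and $y$ are fixed points, $x_v = f_v(x) = f_v(y) = y_v$, completing the induction.

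I do not foresee any real obstacle; the statement is essentially an unwinding of the definitions of ``interaction graph'' and ``feedback vertex set.'' The only point to handle with care is the implicit use of the fact that $f_v$ depends \emph{only} on variables indexed by $N_G^-(v)$, which is the converse direction of the definition of the interaction graph and must be invoked explicitly. Everything else is a straightforward acyclic induction.
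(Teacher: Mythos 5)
Your argument is correct and is essentially the paper's own proof: both take a topological ordering of the acyclic digraph $G\setminus I$ and induct along it, using that $f_{v_i}$ depends only on coordinates in $I\cup\{v_1,\dots,v_{i-1}\}$ together with the fixed-point equations $x_{v_i}=f_{v_i}(x)$ and $y_{v_i}=f_{v_i}(y)$. Your explicit remark that one must invoke the converse direction of the interaction-graph definition (that $f_v$ depends \emph{only} on variables indexed by $N^-_G(v)$) is exactly the point the paper uses implicitly when it writes $f_{v_i}(x)=f_{v_i}(x_J)$.
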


\begin{proof}
The direction $\Rightarrow$ is obvious. To prove the converse, suppose that $x_I= y_I$ and let us prove that $x= y$. Let $G$ be the interaction graph of $f$ and let $v_1,\dots,v_m$ be an enumeration of the vertices of $G\setminus I$ in the topological order (in such a way that there is no arc from $v_j$ to $v_i$ if $i<j$). We prove, by induction on $i$, that $x_{v_i}= y_{v_i}$. Since $f_{v_1}$ only depends on variables with indices in $I$ we can write $x_{v_1}=f_{v_1}(x)=f_{v_1}(x_I)$ and $y_{v_1}=f_{v_1}(y)=f_{v_1}(y_I)$. Since $x_I= y_I$ we deduce that $x_{v_1}=y_{v_1}$. Now, let $1<i\leq m$. Then, similarly, $f_{v_i}$ only depends on variables with indices in $J=I\cup\{v_1,\dots,v_{i-1}\}$ and thus we can write $x_{v_i}=f_{v_i}(x)=f_{v_i}(x_J)$ and $y_{v_i}=f_{v_i}(y)=f_{v_i}(y_J)$. By induction hypothesis $x_J= y_J$, thus $x_{v_i}=y_{v_i}$. This proves that $x=y$. 
\end{proof}

\begin{lemma}\label{lem:iso}
Let $f$ be a Boolean network and let $I$ be a feedback vertex set of its interaction graph. If $f_v$ is monotone for all $v\not\in I$ then 
\[
\forall x,y\in\fixe(f),\qquad x\leq y\iff x_I\leq y_I.
\]
As a consequence, $\fixe(f)$ is isomorphic to $\{x_I:x\in\fixe(f)\}$
\end{lemma}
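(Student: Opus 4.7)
The plan is to mirror the proof of the preceding lemma almost verbatim, replacing the equalities with inequalities and invoking monotonicity of $f_v$ (for $v\notin I$) at each step where the previous proof used the fact that a function produces identical outputs on identical inputs.

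First, the direction $\Rightarrow$ is immediate, since $x\leq y$ trivially implies $x_I\leq y_I$. For $\Leftarrow$, assume $x_I\leq y_I$ and let $G$ be the interaction graph of $f$. Since $I$ is a feedback vertex set, $G\setminus I$ is acyclic, so I can enumerate the vertices of $G\setminus I$ as $v_1,\dots,v_m$ in topological order, ensuring no arc goes from $v_j$ to $v_i$ when $i<j$. I will then prove by induction on $i$ that $x_{v_i}\leq y_{v_i}$.

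For the base case, the in-neighbors of $v_1$ in $G$ all lie in $I$, so $f_{v_1}$ depends only on coordinates indexed by $I$, and we may write $x_{v_1}=f_{v_1}(x_I)$ and $y_{v_1}=f_{v_1}(y_I)$. Since $f_{v_1}$ is monotone (as $v_1\notin I$) and $x_I\leq y_I$, we conclude $x_{v_1}\leq y_{v_1}$. For the inductive step with $1<i\leq m$, the in-neighbors of $v_i$ lie in $J=I\cup\{v_1,\dots,v_{i-1}\}$, so $x_{v_i}=f_{v_i}(x_J)$ and $y_{v_i}=f_{v_i}(y_J)$. By the induction hypothesis combined with the assumption $x_I\leq y_I$, we obtain $x_J\leq y_J$, and monotonicity of $f_{v_i}$ yields $x_{v_i}\leq y_{v_i}$. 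Combined with $x_I\leq y_I$, this gives $x\leq y$.

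Finally, for the ``as a consequence'' clause, I will invoke the preceding lemma: the restriction map $x\mapsto x_I$ is injective on $\fixe(f)$, hence a bijection onto its image $\{x_I:x\in\fixe(f)\}$. The equivalence just proved shows this bijection preserves the partial order in both directions, establishing the claimed isomorphism. I do not anticipate a real obstacle here; the only subtle point is to make sure the topological ordering argument only uses monotonicity for indices outside $I$, which is exactly where the hypothesis is placed.
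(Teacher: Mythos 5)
Your proof is correct and follows essentially the same route as the paper: the forward direction is immediate, and the converse is proved by enumerating the vertices of $G\setminus I$ in topological order and inducting, using monotonicity of $f_{v_i}$ exactly where the unconditioned version of the lemma used equality of inputs. The handling of the isomorphism claim via injectivity plus order-preservation in both directions is also the intended reading.
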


\begin{proof}
The direction $\Rightarrow$ is obvious. To prove the converse, suppose that $x_I\leq y_I$ and let us prove that $x\leq y$. Let $G$ be the interaction graph of $f$ and let $v_1,\dots,v_m$ be an enumeration of the vertices of $G\setminus I$ in the topological order (in such a way that there is no arc from $v_j$ to $v_i$ if $i<j$). We prove, by induction on $i$, that $x_{v_i}\leq y_{v_i}$. Since $f_{v_1}$ only depends on variables with an index in $I$ we can write $x_{v_1}=f_{v_1}(x)=f_{v_1}(x_I)$ and $y_{v_1}=f_{v_1}(y)=f_{v_1}(y_I)$. Since $f_{v_1}$ is monotone and $x_I\leq y_I$ we deduce that $x_{v_1}\leq y_{v_1}$. Now, let $1<i\leq m$. Then, similarly, $f_{v_i}$ only depends on variables with an index in $J=I\cup\{v_1,\dots,v_{i-1}\}$ and we can write $x_{v_i}=f_{v_i}(x)=f_{v_i}(x_J)$ and $y_{v_i}=f_{v_i}(y)=f_{v_i}(y_J)$. Since $f_{v_i}$ is monotone and since, by induction hypothesis, $x_J\leq y_J$, we deduce that $x_{v_i}\leq y_{v_i}$. This proves that $x\leq y$. 
\end{proof}

\subsection{Introduction of {\boldmath$\nu$\unboldmath} in the upper bound}

The next lemma is a simple but very useful (and apparently new) application of a well known theorem, due to one of the authors \cite[Theorem 3]{A08} (see also Remy-Ruet-Thieffry \cite{RRT08}): {\em If a Boolean network $f$ has two distinct fixed points $x$ and $y$, then its signed interaction graph has a non-negative cycle $C$ such that $x_v\neq y_v$ for every vertex $v$ of $C$.} 

\begin{lemma}\label{lem:totalorder}
If the set of fixed points of a Boolean network $f$ contains a chain of size $\ell+1$, then the signed interaction graph of $f$ has $\ell$ vertex-disjoint non-negative cycles. 
\end{lemma}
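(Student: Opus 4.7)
The plan is to use the cited Aracena-Remy-Ruet-Thieffry theorem consecutively along the chain, and then to observe that the strict monotonicity of the chain automatically forces the resulting cycles to be pairwise vertex-disjoint.

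More precisely, let $x^0 < x^1 < \cdots < x^\ell$ be a chain of fixed points of $f$. For each $i \in \{1, \dots, \ell\}$, the fixed points $x^{i-1}$ and $x^i$ are distinct, so I can apply the stated theorem to obtain a non-negative cycle $C_i$ in the signed interaction graph of $f$ such that $x^{i-1}_v \neq x^i_v$ for every vertex $v$ of $C_i$. Since moreover $x^{i-1} \leq x^i$, the condition $x^{i-1}_v \neq x^i_v$ forces $x^{i-1}_v = 0$ and $x^i_v = 1$ for every $v \in V(C_i)$.

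It then remains to verify vertex-disjointness. Suppose, for contradiction, that some vertex $v$ belongs to both $C_i$ and $C_j$ with $i < j$. On the one hand, $v \in C_i$ gives $x^i_v = 1$. On the other hand, $v \in C_j$ gives $x^{j-1}_v = 0$. Since $i \leq j-1$, the chain condition yields $x^i \leq x^{j-1}$, so in particular $x^i_v \leq x^{j-1}_v$, i.e. $1 \leq 0$, a contradiction. Hence $C_1, \dots, C_\ell$ are $\ell$ vertex-disjoint non-negative cycles, as required.

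There is no real obstacle here: the entire content is packaged in the cited theorem, and the disjointness is essentially automatic from the strictness of the chain. The only thing worth stating carefully is the direction of the inequality $x^{i-1} < x^i$ together with $x^{i-1}_v \neq x^i_v$, which pins down $x^{i-1}_v = 0$ and $x^i_v = 1$ simultaneously on every cycle $C_i$, and it is this common pattern across all $i$ that makes the ``telescoping'' argument for disjointness work.
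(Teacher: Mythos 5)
Your proof is correct and follows essentially the same route as the paper's: apply the cited theorem to each consecutive pair $x^{i-1}<x^i$ of the chain, then use the chain inequalities to show the resulting cycles live on pairwise disjoint vertex sets (the paper phrases this as disjointness of the difference sets $I_k$, which is the same argument you give at the level of cycle vertices). Nothing is missing.
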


\begin{proof}
Suppose indeed that $x^0< x^1< \cdots <x^\ell$ is a chain of fixed points of size $\ell+1$. For all $k\in [\ell]$, let $I_k$ be the set of components that differ between $x^{k-1}$ and $x^k$. Then, by the theorem mentioned above, for all $k\in [\ell]$, the signed interaction graph of $f$ has a non-negative cycle with only vertices in $I_k$. Since the sets $I_1,\dots,I_\ell$ are mutually disjoint, this proves the lemma.     
\end{proof}

The following theorem is a straightforward application of the previous lemmas and two well known theorems in combinatorics. The first is the Knaster-Tarski theorem stated in the introduction. The second is a theorem of Erd\H{o}s \cite{E45}: {\em For every $1\leq \ell\leq n$, the maximal size of a subset of $\B^n$ without chain of size $\ell+1$ is the sum of the $\ell$ largest binomial coefficients ${n\choose k}$.} 

\begin{theorem}\label{thm:erdos}
Let $f$ be a monotone Boolean network with interaction graph $G$. The number of fixed points in $f$ is at most $2$ plus the sum of the $\nu-1$ largest binomial coefficients $\tau\choose k$.
\end{theorem}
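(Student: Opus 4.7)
The plan is to chain together the three ingredients already in hand — Lemma \ref{lem:iso}, Lemma \ref{lem:totalorder}, and the Knaster--Tarski / Erd\H{o}s theorems — to get the bound on $|\fixe(f)|$.

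First I would fix a minimum feedback vertex set $I$ of $G$, so that $|I|=\tau$. Since $f$ is monotone, every component $f_v$ is monotone (in particular for $v\notin I$), so Lemma \ref{lem:iso} applies and gives that $\fixe(f)$ is poset-isomorphic to the subset $L:=\{x_I:x\in\fixe(f)\}\subseteq \B^\tau$. Because the Knaster--Tarski theorem tells us that $\fixe(f)$ is a non-empty lattice under the coordinate-wise order on $\B^n$, the isomorphism transfers the lattice structure to $L$: in particular $L$ is non-empty and has a minimum $\hat 0_L$ and a maximum $\hat 1_L$.

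Next I would bound the chain length in $L$. Any chain of size $\ell+1$ in $L$ pulls back under the isomorphism to a chain of size $\ell+1$ in $\fixe(f)$, and Lemma \ref{lem:totalorder} then furnishes $\ell$ vertex-disjoint non-negative cycles in the signed interaction graph of $f$. Since each such cycle is in particular a cycle of $G$, we must have $\ell\leq \nu$. Consequently $L$ contains no chain of size $\nu+2$.

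Finally I would peel off the extremes of $L$ to apply Erd\H{o}s cleanly. If $\hat 0_L=\hat 1_L$ then $|\fixe(f)|=|L|=1$ and the bound holds trivially, so assume $\hat 0_L<\hat 1_L$. Put $L':=L\setminus\{\hat 0_L,\hat 1_L\}$. Any chain in $L'$, prepended by $\hat 0_L$ and appended by $\hat 1_L$, yields a chain in $L$ of size two greater; hence $L'$ contains no chain of size $\nu$. By Erd\H{o}s' theorem applied to $L'\subseteq \B^\tau$ with parameter $\nu-1$,
\[
|L'|\leq \sum_{k=\lfloor\frac{\tau-\nu+2}{2}\rfloor}^{\lfloor\frac{\tau+\nu-2}{2}\rfloor}{\tau \choose k},
\]
and adding back the two extremes gives $|\fixe(f)|=|L|\leq 2+|L'|$, which is exactly the claimed bound. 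The only subtle points are checking the low-$\nu$ edge cases ($\nu=0$ forces $\tau=0$ and $|L|=1$; $\nu=1$ forces $|L|\leq 2$), both of which are immediate; there is no real obstacle since the hard work is already encapsulated in the cited lemmas and classical theorems.
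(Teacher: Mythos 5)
Your proposal is correct and follows essentially the same route as the paper's own proof: Knaster--Tarski gives the lattice structure, Lemma~\ref{lem:iso} transfers it to $L\subseteq\B^\tau$, Lemma~\ref{lem:totalorder} excludes chains of size $\nu+2$, and removing the two extremal elements lets Erd\H{o}s' theorem bound $|L'|$. The only difference is that you spell out the low-$\nu$ edge cases explicitly, which the paper leaves implicit.
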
 

\begin{proof}
By the Knaster-Tarski theorem, $\fixe(f)$ is a lattice. By Lemma~\ref{lem:iso}, this lattice is isomorphic to a lattice $L\subseteq \B^\tau$, and by Lemma~\ref{lem:totalorder}, $L$ has no chain of size $\nu+2$. Let $L'$ be obtained from $L$ by removing its maximal and minimal element. Since every maximal chain in $L$ contains these two extremal elements, $L'$ has no chain of size $\nu$. It is then sufficient to apply the Erd\H{o}s' theorem mentioned above. 
\end{proof}

An equivalent statement is the one given in the introduction:
\begin{equation}\label{eq:upperbound}
\phi_m(G)\leq 2+ \sum_{k=\lfloor\frac{\tau-\nu+2}{2}\rfloor}^{\lfloor\frac{\tau+\nu-2}{2}\rfloor}{\tau \choose k}.
\end{equation}
Hence, if $\phi_m(G)=2^\tau$ then $\phi_m(G)$ equals $2$ plus the sum of the $\tau-1$ largest binomial coefficients of order $\tau$, and thus $\nu-1\geq \tau-1$. Since $\nu\leq\tau$ this implies $\nu=\tau$. Thus we recover the implication $\phi_m(G)=2^\tau\Rightarrow\nu=\tau$ established in \cite{GRF16} with a dedicated proof. Another straightforward consequence is:
\begin{equation}\label{eq:nu1}
\nu\leq 1~\Rightarrow~ \phi_m(G)\leq 2.
\end{equation}

\medskip
The above upper bound on $\phi_m(G)$ improves the feedback Êbound $2^\tau$ when the gap between $\nu$ and $\tau$ is large. But for a fixed $\nu$, $\tau$ cannot be arbitrarily large. This results from the fact that directed cycles have the so called Erd\H{o}s-P\'osa property: as proved by Reed, Roberston, Seymour and Thomas \cite{RRST95}, there exists a smallest function $h:\mathbb{N}\to\mathbb{N}$ such that $\tau\leq h(\nu)$ for every digraph $G$. The only known exact value of $h$ is $h(1)=3$ \cite{M93}, and \cite{RRST95} provides an upper bound on $h(\nu)$ which is of power tower type. It is however difficult to find a family of digraphs for which the gap between $\nu$ and $\tau$ is large. To our knowledge, the best construction is by Seymour \cite{S95}, showing that for $n$ sufficiently large there exists a digraph on $n$ vertices with $\tau\geq\frac{1}{30}\nu\log\nu$. Let us also mention that $\nu=\tau$ for every strongly planar digraph~\cite{GT11}. These subtile relationships between $\nu$ and $\tau$ make particularly hard the study of the optimality of the upper bound on $\phi_m(G)$.  

\medskip
We will now identify another structure in $\fixe(f)$ that forces the presence of disjoint cycles. We need the following definition. 

\begin{definition}
A {\em $k$-pattern} in a subset $P\subseteq \B^n$ is a couple of two sequences $(x^1,\dots,x^k)$ and $(y^1,\dots,y^k)$, each of $k$ distinct elements of $P$, such that, for all $p,q\in [k]$,  
\[
x^p\leq y^q~\iff~p\neq q. 
\]
\end{definition}

For instance, the $n$ base vectors $(e_1,\dots,e_n)$ and their negations $(\overline{e_1},\dots,\overline{e_n})$ forms an $n$-pattern of $\B^n$. Also, if $x\in\B^n$ and $x\neq 0$, then $(x,\overline x)$ and $(\overline x, x)$ form a $2$-pattern of $\B^n$. This shows that the two sequences may have common elements. Additional properties on $k$-patterns are given below. 

\begin{proposition}\label{pro:pattern}
If $X=(x^1,\dots,x^k)$ and $Y=(y^1,\dots,y^k)$ form a $k$-pattern of $\B^n$, then $X$ and $Y$ are antichains and $k\leq n$.
\end{proposition}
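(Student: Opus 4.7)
The plan is to extract both conclusions directly from the defining biconditional $x^p\leq y^q\iff p\neq q$, using the ``diagonal'' failures $x^p\not\leq y^p$ and the ``off-diagonal'' comparabilities $x^p\leq y^q$ in tandem.

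First I would show that $X$ is an antichain. Suppose for contradiction that $x^p\leq x^q$ with $p\neq q$. For any $r\neq q$, the $k$-pattern forces $x^q\leq y^r$, so by transitivity $x^p\leq y^r$ for every such $r$. Choosing $r=p$ (which is allowed since $p\neq q$) gives $x^p\leq y^p$, contradicting the ``only if'' direction. The proof that $Y$ is an antichain is symmetric: if $y^p\leq y^q$ with $p\neq q$, then for any $r\neq p$ we have $x^r\leq y^p\leq y^q$, and specializing to $r=q$ yields $x^q\leq y^q$, again a contradiction.

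For the bound $k\leq n$, I would avoid appealing to Sperner's theorem, since that only gives the much weaker $k\leq \binom{n}{\lfloor n/2\rfloor}$. Instead, for each $p\in[k]$ I pick a coordinate $i_p\in[n]$ witnessing $x^p\not\leq y^p$, that is, with $x^p_{i_p}=1$ and $y^p_{i_p}=0$. The key claim is that $p\mapsto i_p$ is injective. Indeed, if $i_p=i_q=:i$ for some $p\neq q$, then by the choice of $i_p$ we have $x^p_i=1$, and by the choice of $i_q$ we have $y^q_i=0$; but the pattern requires $x^p\leq y^q$, forcing $x^p_i\leq y^q_i$, i.e., $1\leq 0$, a contradiction. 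So the indices $i_1,\dots,i_k$ are $k$ distinct elements of $[n]$, whence $k\leq n$.

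The only ``obstacle'' worth flagging is the temptation to treat the antichain and cardinality claims separately and reach for Sperner for the latter; the right move is to notice that the $k$-pattern packages together, for each $p$, a diagonal failure that produces a coordinate witness, and enough off-diagonal comparabilities to force those witnesses to be pairwise distinct. That observation yields both conclusions in a few lines.
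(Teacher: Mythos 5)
Your proof is correct and follows essentially the same route as the paper's: the antichain claim via $x^p\leq x^q\leq y^p$, and the bound $k\leq n$ by assigning to each $p$ a coordinate that no other index can share. The only (cosmetic) difference is that you certify injectivity directly from $x^p_{i}=1$ and $y^q_{i}=0$ contradicting $x^p\leq y^q$, whereas the paper first argues that each $x^p$ has a coordinate where it is the unique $1$ among the $x^q$'s; the two witnesses coincide, so this is the same argument in slightly streamlined form.
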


\begin{proof}
There is nothing to prove if $k=1$ so assume that $k\geq 2$. If $x^p\leq x^q$ for some $p\neq q$ then $x^p\leq x^q\leq y^p$, which is a contradiction. Thus $X$ is an antichain, and we prove similarly that $Y$ is an antichain. 
Let us prove now that $k\leq n$. For all $p\in[k]$ there exists at least one $v\in[n]$ such that $x^p_v=1$ since otherwise $x^p\leq y^p$. Furthermore, if $x^p_v=1$ and $x^q_v=1$ for some $q\neq p$ then $y^p_v=1$ since $x^q\leq y^p$. Thus, if for all $v$ such that $x^p_v=1$ there exists some $q\neq p$ with $x^q_v=1$ then $x^p\leq y^p$, a contradiction. Thus, for every $p\in [k]$, there is at least one index in $[n]$, say $v_p$, such that $x^p_{v_p}>x^q_{v_p}$ for all $q\neq p$. Thus the $v_p$ are all distinct and we deduce that $k\leq n$.  
\end{proof}

\begin{lemma}\label{lem:pattern}
Let $f$ be a monotone network with interaction graph $G$. If the set of fixed points of $f$ has a $k$-pattern, then $G$ has $k$ vertex-disjoint cycles.
\end{lemma}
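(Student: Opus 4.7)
My plan is to reduce the statement to Lemma~\ref{lem:totalorder} by exhibiting a strictly increasing chain of $k+1$ fixed points inside $\fixe(f)$. Since $f$ is monotone, its signed interaction graph has only positive arcs, hence every non-negative cycle produced by Lemma~\ref{lem:totalorder} is automatically a cycle of $G$; it therefore suffices to produce such a chain.

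By the Knaster--Tarski theorem $\fixe(f)$ is a complete lattice, so every finite subset admits a join in $\fixe(f)$. Let $(x^1,\dots,x^k)$ and $(y^1,\dots,y^k)$ be the two sequences of the $k$-pattern, and let $\hat 0$ denote the minimum of the fixed-point lattice. For each $1 \leq i \leq k$, set
\[
s^i := x^1 \vee x^2 \vee \cdots \vee x^i,
\]
where the join is computed in $\fixe(f)$. The plan is to verify that $\hat 0 < s^1 < s^2 < \cdots < s^k$ is strict. The first inequality is strict because $x^1 = \hat 0$ would force $x^1 \leq y^1$, violating the defining property of the $k$-pattern. For the inductive step, the key observation is that $y^i$ is an upper bound of $\{x^j : j \neq i\}$, hence of $\{x^1,\dots,x^{i-1}\}$, and therefore $s^{i-1} \leq y^i$ inside the lattice; if we had $s^{i-1} = s^i$ then $x^i \leq s^{i-1} \leq y^i$, contradicting $x^i \not\leq y^i$. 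This yields a chain of size $k+1$ in $\fixe(f)$, and Lemma~\ref{lem:totalorder} applied with $\ell = k$ delivers the desired $k$ vertex-disjoint cycles of $G$.

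I do not expect a substantial obstacle here; the whole content of the proof is to pick the right witnesses for the chain. A naive alternative would try to localise, for each $p$, a cycle inside the pairwise disjoint set $A_p = \{v : x^p_v = 1,\, y^p_v = 0\}$ by feeding the pair $(x^p,y^p)$ into the theorem underlying Lemma~\ref{lem:totalorder}. This direct route fails because that theorem only controls the full symmetric difference of the two fixed points, and the ``dual'' sets $A'_p = \{v : x^p_v = 0,\, y^p_v = 1\}$ leak across several $A_q$'s (one checks in fact that $A_q \subseteq A'_p$ whenever $p \neq q$), so the cycles it supplies need not be disjoint. Passing through the fixed-point lattice and taking successive joins circumvents this obstruction by converting the crown-like structure of the $k$-pattern into a bona fide chain, to which Lemma~\ref{lem:totalorder} applies off the shelf.
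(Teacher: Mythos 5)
Your proof is correct, but it takes a genuinely different route from the paper's. The paper argues directly: for each $p$ it sets $V_p=\{v: x^p_v>y^p_v\}$, notes $V_p\neq\emptyset$ since $x^p\not\leq y^p$, and uses monotonicity of $f_v$ (if $f_v(x^p)>f_v(y^p)$ then some in-neighbor $u$ of $v$ satisfies $x^p_u>y^p_u$) to show $G[V_p]$ has minimum in-degree at least one, hence contains a cycle $C_p$; disjointness of the $V_p$ follows from $x^q\leq y^p$ forcing $x^q_v=0$ on $V_p$. That direct route is elementary, self-contained, and — crucially for the rest of the paper — it \emph{localizes} the cycles inside the sets $V_p$, a construction that is explicitly reused and refined in the proof of Lemma~\ref{lem:two_strongly_independent_cycles} on special packings. (Note that your aside about a ``naive direct route'' failing does not apply to the paper's argument: the paper never feeds the pair $(x^p,y^p)$ into the non-negative-cycle theorem; it runs a bare in-degree argument on $V_p$.) Your route instead converts the $k$-pattern into a chain $\hat 0< s^1<\dots<s^k$ of $k+1$ fixed points via joins in the Knaster--Tarski lattice — the verification that $s^{i-1}\leq y^i$ and hence $x^i\not\leq s^{i-1}$ is sound, since $y^i$ is a fixed point bounding $x^1,\dots,x^{i-1}$ from above and the join in $\fixe(f)$ is the least such fixed point — and then invokes Lemma~\ref{lem:totalorder}. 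This is shorter given Lemma~\ref{lem:totalorder}, but rests on heavier machinery (Knaster--Tarski plus the non-negative-cycle theorem of \cite{A08} underlying that lemma). What your approach buys is a purely order-theoretic byproduct: a $k$-pattern in any finite lattice forces a chain of $k+1$ elements, which shows that the third condition of Theorem~\ref{thm:upperbound} is in fact implied by the first two; what it loses is the explicit placement of the cycles, which the paper needs downstream.
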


\begin{proof}
Suppose that $(x^1,\dots,x^k)$ and $(y^1,\dots,y^k)$ form a $k$-pattern in $\fixe(f)$. For every $p\in [k]$ let $V_p$ be the set of vertices $v$ in $G$ such that $x^p_v>y^p_v$. This set $V_p$ is not empty since $x^p\not\leq y^p$. Furthermore, if $v\in V_p$, then $f_v(x^p)=x^p_v>y^p_v=f_v(y^p)$ and since $f_v$ is monotone, we deduce that $G$ has an arc $uv$ with $x^p_u> y^p_u$. Thus $G[V_p]$ has minimum in-degree at least one, and thus $G[V_p]$ has a cycle, say $C_p$. Let us prove that the $k$ cycles $C_1,\dots,C_k$ selected in this way are vertex-disjoint. Let $p,q\in [k]$ with $p\neq q$. If $v\in V_p$ then $x^p_v>y^p_v$ and since $y^p\geq x^q$ we have $x^q_v=0$ and thus $v\not\in V_q$. Hence $V_p\cap V_q=\emptyset$, thus $C_p$ and $C_q$ are indeed vertex-disjoint. 
\end{proof}

Suppose that $f$ has $2^\tau$ fixed points. Then $\fixe(f)$ is isomorphic to $\B^\tau$, which has a $\tau$-pattern. Thus, according to the previous lemma, we have $\nu=\tau$ and we recover, by another way, the implication $\phi_m(G)=2^\tau\Rightarrow \nu=\tau$. Furthermore, we deduce from the previous lemma that $\B^\tau$ has no $(\tau+1)$-pattern (since otherwise $\nu>\tau$ which is false). This has been proved independently in Proposition~\ref{pro:pattern}.

\subsection{Introduction of {\boldmath$\nu^*$\unboldmath} in the upper bound}

We will now identify a slightly different structures that force the presence of disjoint cycles verifying some additional conditions. These structures are the {\em special $k$-patterns}, defined in a compact way in the introduction. Here is an equivalent and more explicit definition, based on $k$-patterns. 

\begin{definition}
A $k$-pattern $(x^1,\dots,x^k),(y^1,\dots,y^k)$ is {\em special} if, for all $p\in [k]$,  
\[
y^p=\overline{x^p}.  
\]
\end{definition}

For instance, the $n$ base vectors $(e_1,\dots,e_n)$ and their negations $(\overline{e_1},\dots,\overline{e_n})$ form a special $n$-pattern. This example can be generalized as follows. For every $1\leq\ell\leq \frac{n}{2}$ there always exists a collection $I_1,\dots,I_k$ of $k=\lfloor\frac{n}{l}\rfloor$ disjoint subsets of $[n]$ of size $\ell$. The sequences $(e_{I_1},\dots,e_{I_k})$ and $(\overline{e_{I_1}},\dots,\overline{e_{I_k}})$ then form a special $k$-pattern. As a consequence, for every $P\subseteq \B^n$, 
\begin{equation}\label{eq:specialpattern1}
\begin{array}{c}
\text{${[n]\choose \ell}\subseteq P$ and ${[n]\choose n-\ell}\subseteq P$}
~\Rightarrow~ 
\text{$P$ has a special $\lfloor\frac{n}{l}\rfloor$-pattern.}
\end{array}
\end{equation}
Conversely, every special $k$-pattern can obviously be expressed as $(e_{I_1},\dots,e_{I_k}),$ $(\overline{e_{I_1}},\dots,\overline{e_{I_k}})$ for some subsets $I_1,\dots,I_k$ of $[n]$. Then, necessarily, these sets $I_p$ are pairwise disjoint, since for all distinct $p,q\in[n]$, we have $e_{I_p}\leq\overline{e_{I_q}}$ and this is equivalent to $I_p\subseteq [n]\setminus I_q$. Note also that if there exists $x\in P$ such that $\overline{x}\in P$ then $P$ has a special $2$-pattern, formed by $(x,\overline{x})$ and $(\overline{x},x)$. Conversely, if $P$ has a spacial $2$-pattern then, by definition, there exists $x\in P$ such that $\overline{x}\in P$. Thus, setting $\overline{P}=\{\overline{x}:x\in P\}$, we have 
\begin{equation}\label{eq:specialpattern2}
\text{$P$ has no special $2$-pattern}~\iff~
P\cap\overline{P}=\emptyset.
\end{equation}


\medskip
We also recall, from the introduction, the definition of a {\em principal path} and a {\em special packing} (see Figures \ref{fig:ex1} and \ref{fig:nustar} for illustrations). 

\begin{definition}
A {\em packing} of size $k$ in a digraph $G$ is a collection of $k$ vertex-disjoint cycles of $G$, say $C_1,\dots,C_k$. Given such a packing, a path $P$ of $G$ is said {\em principal} if it has no arc and no internal vertex that belong to some cycle of the packing \footnote{If $P$ has some internal vertices, and if none of them are in the packing then, obviously, no arc of $P$ is in the packing. Thus the condition ``$P$ has no arc in the packing'' makes sense only when $P$ has no internal vertex, that is, when $P$ is reduced to a single arc.}. We say that the packing is {\em special} if the following holds: for every cycle $C_i$ and for every vertex $v$ in $C_i$, if there exists a principal path from a cycle $C_j\neq C_i$ to $v$, then there exists a principal path from $C_i$ or a source to $v$ (this path can eventually start with $v$, and thus it may be a cycle). We denote by $\nu^*=\nu^*(G)$ the maximum size of a special packing in $G$. 
\end{definition}

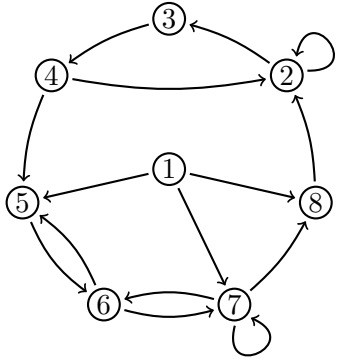
\begin{figure}
\centering
\begin{tikzpicture}
\node[main node] (1) at (0,0) {1};
\foreach \x in {2,...,8}{
	\node[main node] (\x) at (360*\x/7+90-3*360/7:2) {\x};
}
\bucle{7}{90-3*360/7}
\bucle{2}{360*2/7+90-3*360/7}
\path[arco]
(1) edge (5)
(1) edge (7)
(1) edge (8);
\path[arco, bend right =10]
(2) edge (3)
(4) edge (5)
(3) edge (4)
(4) edge (2)
(5) edge[bend right = 15] (6)
(6) edge[bend right = 15] (5)
(6) edge[bend right = 15] (7)
(7) edge[bend right = 15] (6)
(7) edge (8)
(8) edge (2)
;
\end{tikzpicture}
\caption{\label{fig:ex1}The cycles $C_1=2,3,4,2$, $C_2=5,6,5$ and $C_3=7,7$ form a packing which is not special: $P=7,6$ is a principal path from $C_3$ to $6$, but there is no principal path from $C_2$ to $6$ or from a source to $6$. However, the packing formed by $C_1$ and $C_2$ is a special packing. Indeed, $P=6,7,8,2$ is the unique principal path from $C_2$ to $C_1$, and $P'=2,2$ is a principal path from $C_1$ to $2$. Furthermore, $P=4,5$ is the unique principal path from $C_1$ to $C_2$ and there exists a principal path $P'=1,5$ from a source to $5$.}
\end{figure}

\begin{figure}
\def\n{6}
\def\r{1.5}
\[
\begin{array}{ccc}
\begin{array}{c}
\begin{tikzpicture}
\foreach \x in {1,...,\n}{
		\node[pnode] (\x) at ({\x*(360/\n)}:\r) {};
}
\foreach \x [evaluate=\x as \nextx using {int(mod(\x,\n)+1)}] in {1,...,\n}{
	\path[arco,bend right=12](\x) edge (\nextx);
	\bucle{\x}{\x*360/\n}
}
\end{tikzpicture}
\\
\nu^*=\lfloor n/2\rfloor \quad \nu=\tau=n
\end{array}
&\quad&
\begin{array}{c}
\begin{tikzpicture}
\node[pnode] (center) at (0,0) {};
\foreach \x in {1,...,\n}{
		\node[pnode] (\x) at ({\x*(360/\n)}:\r) {};
}
\foreach \x [evaluate=\x as \nextx using {int(mod(\x,\n)+1)}] in {1,...,\n}{
	\path[arco,bend right=12](\x) edge (\nextx);
	\bucle{\x}{\x*360/\n}
	\path[arco](center) edge (\x);
}
\end{tikzpicture}
\\
\nu^*=\nu=\tau=n-1.
\end{array}
\\[30mm]
\begin{array}{c}
\begin{tikzpicture}
\foreach \x in {1,...,\n}{
		\node[pnode] (\x) at ({\x*(360/\n)}:\r) {};
		\node[pnode] (-\x) at ({\x*(360/\n)}:{\r/2.5}) {};
}
\foreach \x [evaluate=\x as \nextx using {int(mod(\x,\n)+1)}] in {1,...,\n}{
	\bucle{\x}{\x*360/\n}
	\path[arco,bend right=12](\x) edge (\nextx);
	\path[arco,bend right=16](-\x) edge (\x)
	(\x) edge (-\x);
}
\end{tikzpicture}
\\
\nu^*=\nu=\tau=n/2
\end{array}
&\quad&
\begin{array}{c}
\begin{tikzpicture}
\node[pnode] (center) at (0,0) {};
\foreach \x in {1,...,\n}{
		\node[pnode] (\x) at ({\x*(360/\n)}:\r) {};
}
\foreach \x [evaluate=\x as \nextx using {int(mod(\x,\n)+1)}] in {1,...,\n}{
	\path[arco,bend right=13](center) edge (\x) (\x) edge (center);
}
\end{tikzpicture}
\\
\nu^*=\nu=\tau=1
\end{array}
\end{array}
\]
\caption{\label{fig:nustar} Parameters $\nu^*$, $\nu$ and $\tau$ for some digraphs.}
\end{figure}
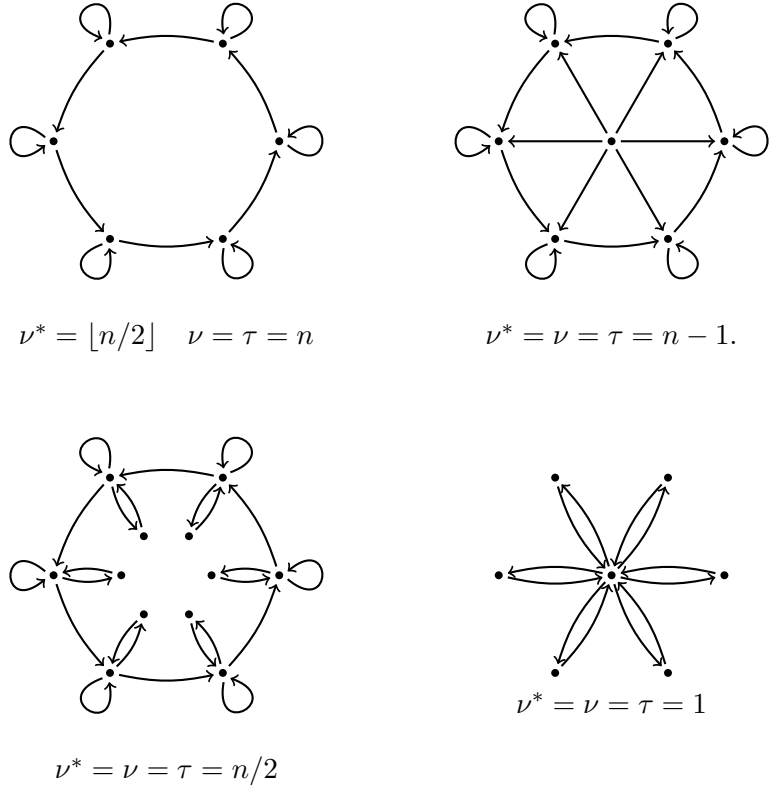

We now connect special patterns with special packings in the following way.  

\begin{lemma}\label{lem:two_strongly_independent_cycles}
Let $f$ be a monotone network with interaction graph $G$, and let $I$ be a feedback vertex set of $G$. If $\{x_I:x\in\fixe(f)\}$ has a special $k$-pattern, then $G$ has a special packing of size $k$. 
\end{lemma}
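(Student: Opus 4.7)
\medskip
I plan to begin by lifting the special $k$-pattern from $\{x_I:x\in\fixe(f)\}$ to $\fixe(f)$ via the isomorphism of Lemma~\ref{lem:iso}: this gives distinct fixed points $x^1,\dots,x^k,y^1,\dots,y^k$ of $f$ satisfying $x^p\leq y^q\iff p\neq q$ and, additionally, $y^p_I=\overline{x^p_I}$ for each $p$. Following the proof of Lemma~\ref{lem:pattern}, I set $U_p=\{v:x^p_v>y^p_v\}$: the $U_p$'s are pairwise disjoint and each $G[U_p]$ has minimum in-degree at least one, hence contains a cycle. The special-pattern hypothesis further gives $U_p\cap I=I_p:=\{v\in I:x^p_v=1\}$, which are pairwise disjoint nonempty subsets of $I$ (nonempty because if $I_p=\emptyset$ then $x^p_I\leq\overline{x^p_I}=y^p_I$, whence by Lemma~\ref{lem:iso} $x^p\leq y^p$, contradicting the pattern).

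\medskip
To build a special packing of size $k$, for each $p$ I plan to pick the cycle $C_p$ inside a source strongly connected component $S_p$ of $G[U_p]$, i.e.\ an SCC of $G[U_p]$ with no incoming arcs from other SCCs of $G[U_p]$. Such an $S_p$ exists and contains a cycle: since every vertex of $U_p$ has at least one $G[U_p]$-in-neighbor and the source property forces these in-neighbors to lie in $S_p$, the subgraph $G[S_p]$ has minimum in-degree at least one. The purpose of this choice is to ensure that any backward walk starting inside $S_p$ stays in $S_p$. Among all such choices of $(C_1,\dots,C_k)$, I fix one minimizing $\sum_p|V(C_p)|$; this minimality will preclude the existence of alternative cycles inside the $S_p$'s disjoint from the chosen $C_p$'s.

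\medskip
I then verify that the packing is special by contradiction. Suppose some $v\in C_p$ admits a principal path from a cycle $C_q$ with $q\neq p$ but no principal path from $C_p$ or from a source of $G$ to $v$. Let $R$ denote the set of vertices that start a principal path to $v$, so $R\cap C_p=\emptyset$ and $R$ contains no source of $G$. The key observation is that $R\cap U_p$ is backward-closed inside $G[U_p]$: any $w\in R\cap U_p$ lies outside $\mathcal{C}:=C_1\cup\cdots\cup C_k$ (since $R\cap C_p=\emptyset$ and $U_p$ is disjoint from every $C_r$ with $r\neq p$), so prepending any $G[U_p]$-in-neighbor $w'$ of $w$ to a principal path $w\to\cdots\to v$ yields another principal path $w'\to w\to\cdots\to v$, placing $w'$ in $R\cap U_p$. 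By monotonicity $v$ has an in-neighbor $w\in U_p$; if $w$ differs from the $C_p$-predecessor of $v$ then the arc $w\to v$ is itself a principal path, so $R\cap U_p$ is nonempty, and backward closure inside $G[U_p]$ combined with the source-SCC property forces $R\cap U_p$ to contain a cycle $C'_p\subseteq S_p\setminus C_p$, contradicting the minimality of $\sum_p|V(C_p)|$. The main obstacle I foresee is the degenerate ``thin'' case where the only in-neighbor of $v$ in $U_p$ equals the $C_p$-predecessor of $v$; my plan is to dispatch it by tracking the $(x^p,y^p)$-values along the principal path $u_0u_1\cdots u_\ell=v$, which must flip from $(x^p_{u_0},y^p_{u_0})=(0,1)$ (as $u_0\in C_q\subseteq U_q$ forces these values via the pattern inequalities) to $(x^p_v,y^p_v)=(1,0)$, and applying monotonicity together with $y^p_I=\overline{x^p_I}$ at the flip vertex to exhibit a second in-neighbor of $v$ in $U_p$, closing the argument.
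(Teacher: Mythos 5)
Your setup is sound and coincides with the paper's up to the choice of difference sets: lifting the pattern through Lemma~\ref{lem:iso}, disjointness of the sets $V_p=\{v:x^p_v>y^p_v\}$, minimum in-degree one in $G[V_p]$, and $V_p\cap I=\{v\in I:x^p_v=1\}$. The decisive omission is that you never introduce the \emph{non-strict} sets $U_p=\{v:x^p_v\geq y^p_v\}$, and these are exactly what produces the ``principal path from a source of $G$'' branch of the specialness condition. Concretely, suppose $v\in C_p$ receives a principal path from $C_q$ and has, besides its $C_p$-predecessor $u$, an in-neighbor $w$ with $x^p_w=y^p_w$. Such a $w$ lies outside your (strict) $U_p$, so your backward-closure argument never sees it; and it defeats your ``thin case'', because the sandwich $f_v(0,z)\leq f_v(0,1,\dots,1)=f_v(y^p)=0$ requires \emph{every} in-neighbor other than $u$ to satisfy $x^p<y^p$ strictly, which fails at $w$ (e.g.\ $f_v$ could be $z_u\wedge z_w$ or $z_u\vee z_w$, so $v$ keeps in-degree two and no contradiction with the incoming principal path arises). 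Your sketched repair --- tracking the flip of $(x^p,y^p)$ along that path --- cannot close this, since the flip occurs at an internal vertex of the path and says nothing about the in-neighborhood of $v$. The paper handles precisely this case via the observation that a vertex $w$ with $x^p_w=y^p_w$ is either a source of $G$ (when $f_w$ is constant) or has an in-neighbor in the non-strict $U_p$, together with choosing $S_p$ as the \emph{first} cycle-containing strongly connected component of $G[U_p]$ in topological order, so that everything feeding into $S_p$ from $U_p$ traces back to sources of $G$. Note that the conclusion genuinely needs the source branch (see the packing $\{C_1,C_2\}$ in Figure~\ref{fig:ex1}, which is special only thanks to the path $1,5$ from a source), so this case cannot be dispensed with.

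A secondary problem is the contradiction you aim for. Backward closure of $R\cap V_p$ in $G[V_p]$ does not produce a cycle in $S_p\setminus C_p$: if $w\in R\cap V_p$ is an in-neighbor of $v$, the source-SCC property forces $w\in S_p$, and backward closure inside the strongly connected $S_p$ propagates membership in $R$ all the way back to $C_p$, contradicting $R\cap C_p=\emptyset$ directly --- which is the correct, minimality-free way to conclude in that sub-case. As stated, your argument instead lands on a cycle disjoint from $C_p$, possibly in a \emph{different} source SCC of $G[V_p]$, and such a cycle contradicts nothing: replacing $C_p$ by it need not decrease $\sum_p|V(C_p)|$, and a special packing may in any case coexist with further cycles in $V_p$. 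So the minimality device should be dropped, the sub-case of an extra in-neighbor of $v$ in $V_p$ argued via the source-SCC observation (or, as in the paper, via a path inside $S_p$ from $C_p$ to $w$), and the sub-case of an extra in-neighbor in $U_p\setminus V_p$ added along the paper's lines; only when $u$ is the sole in-neighbor of $v$ in the non-strict $U_p$ does the sandwich argument legitimately apply.
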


\begin{proof}
Let $X=(x^1,\dots,x^k)$ and $Y=(y^1,\dots,y^k)$ be two sequences of $k$ distinct fixed points of $f$, and suppose that $X_I=(x^1_I,\dots,x^k_I)$ and $Y_I=(y^1_I,\dots,y^k_I)$ form a special $k$-pattern of $L=\{x_I:x\in\fixe(f)\}$. Since, by Lemma~\ref{lem:iso}, $\fixe(f)$ and $L$ are isomorphic, $X$ and $Y$ form a $k$-pattern of $\fixe(f)$ (which is, however, not necessarily special). 

\medskip
For every $p\in [k]$ we set 
\[
V_p=\{v:x^p_v>y^p_v\}
\qquad\text{and}\qquad
U_p= \{v:x^p_v\geq y^p_v\}.
\]
We first prove that, for all distinct $p,q\in [k]$, 
\begin{equation}\label{eq:VU}
U_p\cap V_q=\emptyset.
\end{equation}
Indeed, if $p\neq q$ then, since $X$ and $Y$ from a $k$-pattern of $\fixe(f)$, we have $x^p\leq y^q$ and $x^q\leq y^p$. Hence, if $v\in V_q$ then $x^q_v>y^q_v\geq x^p_v=0$, so if $v\in U_p$ then $y^p_v=0$ which implies $x^q_v=0$, a contradiction. This proves (\ref{eq:VU}). 
  
\medskip
As already said in Lemma~\ref{lem:pattern}, if $v\in V_p$, then $f_v(x^p)=x^p_v>y^p_v=f_v(y^p)$ and since $f_v$ is monotone we deduce that $G$ has an arc $uv$ with $x^p_u>y^p_u$. Thus 
\begin{equation}\label{eq:firstpro}
\text{\em $G[V_p]$ has minimal in-degree at least one.}
\end{equation}
Furthermore, if $v\in U_p\setminus V_p$, that is $x^p_v=y^p_v=\alpha$ for some $\alpha\in\B$, then $f_v(x^p)=f_v(y^p)=\alpha$. Hence, if $f_v$ is not a constant then $G$ has an arc $uv$ with $u\in U_p$. Indeed, if $f_v$ is not a constant function and $x^p_u<y^p_u$ for every arc $uv$ of $G$, then $f_v(x^p)<f_v(y^p)$, a contradiction. We have thus the following property:
\begin{equation}\label{eq:secondpro}
\text{\em If $v\in U_p\setminus V_p$ and $f_v$ is not a constant function, then $G$ has an arc $uv$ with $u\in U_p$.}
\end{equation}

\medskip
Let $S^1,\dots,S^m$ be an enumeration of the strongly connected components of $G[U_p]$ in the topological order (that is, in such a way that $G[U_p]$ has no arc from $S^i$ to $S^j$ if $j<i$). Let $S^i$ be the first component containing a cycle. This component exists since, by property (\ref{eq:firstpro}), $G[V_p]$ has a cycle and $V_p\subseteq U_p$. Suppose that $G[U_p]$ has an arc $uv$ with $u\notin S^i$ and $v\in S^i$. Then $G[U_p]$ has a path $P$ starting from an initial component $S^j$ with $j<i$ and ending in $u$ (initial means there is no arc from $S^\ell$ to $S^j$ for every $1\leq \ell<j$). Then $S^j$ has no cycle (since $j<i$) thus $S^j=\{w\}$ for some vertex $w$ with in-degree zero in $G[U_p]$. We then deduce from (\ref{eq:secondpro}) that $f_w$ is a constant function, that is, $w$ is a source of $G$. Thus $P$ is a path from a source of $G$ to $u$ with only vertices in $U_p\setminus S^i$, and by adding the arc $uv$ we obtain a path from a source to $v$ with only internal vertices in $U_p\setminus S^i$. Setting $S_p=S^i$ we have thus the following property:
\begin{equation}\label{eq:thirdpro}
\begin{array}{p{11.5cm}}
\em If $G$ has an arc $uv$ with $v\in S_p$ and $u\in U_p\setminus S_p$ then $G$ has a path $P$ from a source of $G$ to $v$ with only internal vertices in $U_p\setminus S_p$.
\end{array}
\end{equation}
Since $X_I$ and $Y_I$ form a special $k$-pattern of $L$, we have $x^p_I=\overline{y^p_I}$ so that $U_p\cap I=V_p\cap I$. Hence $V_p\cap I$ is a feedback vertex set of $G[U_p]$ thus $S_p\cap V_p$ is not empty. According to (\ref{eq:firstpro}) if $v\in S_p\cap V_p$ then either $G[V_p]$ has a cycle $C$ containing $v$ or $G[V_p]$ has a cycle $C$ and a path from $P$ from $C$ to $v$. In both cases, by the choice of $S_p$, $C$ is necessarily contained in $G[S_p]$. Thus 
\[
\text{\em $G[S_p\cap V_p]$ contains a cycle.}
\]

\medskip
For every $p\in[k]$, let $C_p$ be a cycle of $G[S_p\cap V_p]$. From (\ref{eq:VU}) these $k$ cycles are mutually vertex-disjoint, so they form a packing, and it remains to prove that this packing is special. Suppose that $G$ has a principal path $P$ from $C_p$ to $C_q$ with $p\neq q$. Let $v$ be the last vertex of $P$, and let $u$ be the vertex preceding $v$ in $C_q$. To complete the proof, we have to prove that $G$ has a principal path from $C_q$ to $v$ or from a source of $G$ to $v$. We consider two cases. 
\begin{enumerate}
\item
Suppose first that $G$ has an arc $wv$ such that $w\in U_q$ and $w\neq u$. If $w$ is in $C_q$ then the chord $wv$ constitutes a principal path from $C_q$ to $v$. If $w$ is not in $C_q$ and $w\in S_q$, then, since $G[S_q]$ is strongly connected, $G[S_q]$ has a path from $C_q$ to $w$ which, together with the arc $wv$, gives a principal path from $C_q$ to $v$. If $w\not\in S_q$ then, according to (\ref{eq:thirdpro}), $G$ has a principal path from a source of $G$ to $v$. This completes the first case. 
\item
Suppose now that $v$ has no in-neighbor in $U_q$ except $u$. Let $w_1,\dots,w_r$ be an enumeration of the in-neighbors of $v$ distinct from $u$, and let us write $f_v(z)=f_v(z_u,z_{w_1},\dots,z_{w_r})$. We have $x^q_v>y^q_v$ and $x^q_u>y^q_u$ because $v,u\in V_q$. Also, for all $s\in[r]$, we have $x^q_{w_s}<y^q_{w_s}$ because $w_s\not\in U_q$. Since $f$ is monotone, we deduce that, for all $z_{w_1},\dots,z_{w_r}\in\B$, 
\[
\begin{array}{l}
f_v(1,z_{w_1},\dots,z_{w_r})\geq f_v(1,0,\dots,0)=f_v(x^q_u,x^q_{w_1},\dots,x^q_{w_k})=f_v(x^q)=x^q_v=1\\
f_v(0,z_{w_1},\dots,z_{w_r})\leq f_v(0,1,\dots,1)=f_v(y^q_u,y^q_{w_1},\dots,y^q_{w_k})=f_v(y^q)=y^q_v=0\\
\end{array}
\]  
Thus $f_v(z_u,z_{w_1},\dots,z_{w_r})=z_u$ for all $z_u,z_{w_1},\dots,z_{w_r}\in\B$. It means that  $f_v(z)$ only depends on $z_u$, so that $u$ is the unique in-neighbor of $v$ in $G$. Since $u$ is in $C_q$, this contradict the existence of the principal path $P$ from $C_p$ to $v$. Thus the second case is actually not possible, and this completes the proof. 
\end{enumerate}
\end{proof}

Summarizing the previous results we get the following. 

\begin{theorem}[Upper bound for monotone Boolean networks]\label{thm:upperbound}
If $f$ is a monotone Boolean network with interaction graph $G$, then the set of fixed points of $f$ is isomorphic to a subset $L\subseteq \B^\tau$ verifying the following conditions:
\begin{enumerate}
\item
$L$ is a lattice,
\item
$L$ has no chain of size $\nu+2$,
\item
$L$ has no $(\nu+1)$-pattern,
\item
$L$ has no special $(\nu^*+1)$-pattern. 
\end{enumerate}
\end{theorem}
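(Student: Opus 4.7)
The plan is to assemble the theorem directly from the lemmas already proved in this section. Fix $I$ to be a minimum feedback vertex set of $G$, so $|I|=\tau$, and set
\[
L = \{x_I : x\in\fixe(f)\} \subseteq \B^\tau.
\]
The Knaster--Tarski theorem says that $\fixe(f)$ is a non-empty lattice, and Lemma~\ref{lem:iso} (which applies because $f$ is monotone, so in particular $f_v$ is monotone for every $v\notin I$) says that $\fixe(f)$ and $L$ are isomorphic as posets. Hence $L$ is itself a lattice, which gives item 1. All remaining items will be obtained by pulling back the relevant order-theoretic structure from $L$ to $\fixe(f)$ via this isomorphism and then contradicting the maximality of $\nu$ or $\nu^*$.

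For item 2, suppose $L$ contained a chain of size $\nu+2$. The isomorphism would then yield a chain of size $\nu+2$ in $\fixe(f)$, and Lemma~\ref{lem:totalorder} would provide $\nu+1$ vertex-disjoint non-negative cycles in the signed interaction graph of $f$. But since $f$ is monotone every arc carries the sign $+1$, so every cycle of $G$ is non-negative, and we would get $\nu(G)\geq \nu+1$, a contradiction. For item 3, the same pull-back argument turns a $(\nu+1)$-pattern of $L$ into a $(\nu+1)$-pattern of $\fixe(f)$ (the definition of $k$-pattern only uses the order on $\B^n$, and the isomorphism preserves order), and then Lemma~\ref{lem:pattern} gives $\nu+1$ vertex-disjoint cycles in $G$, again contradicting the definition of $\nu$.

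For item 4 the argument is even more direct: a special $(\nu^*+1)$-pattern of $L$ is, by definition, a special $(\nu^*+1)$-pattern of the set $\{x_I : x\in\fixe(f)\}$, so Lemma~\ref{lem:two_strongly_independent_cycles} yields a special packing of size $\nu^*+1$ in $G$, contradicting the definition of $\nu^*$.

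There is essentially no obstacle left at this stage, since every hard step has been absorbed into the earlier lemmas. The only mild subtlety I want to be careful about is the direction of transfer between $L$ and $\fixe(f)$: items 2 and 3 are stated as purely order-theoretic properties, so an order isomorphism transports them back and forth without issue, whereas item 4 involves the negation map $x\mapsto\overline{x}$, which is \emph{not} preserved by an arbitrary order isomorphism. That is precisely why Lemma~\ref{lem:two_strongly_independent_cycles} is already phrased in terms of the restricted set $\{x_I : x\in\fixe(f)\} = L$ rather than $\fixe(f)$ itself, so no transfer is needed for item 4 and the proof goes through cleanly.
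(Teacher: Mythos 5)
Your proof is correct and follows exactly the route the paper intends: Theorem~\ref{thm:upperbound} is stated there as a summary of Lemma~\ref{lem:iso}, Knaster--Tarski, Lemma~\ref{lem:totalorder}, Lemma~\ref{lem:pattern} and Lemma~\ref{lem:two_strongly_independent_cycles}, which is precisely the assembly you carry out. Your closing remark about the negation map not being order-theoretic, and Lemma~\ref{lem:two_strongly_independent_cycles} being phrased on $\{x_I:x\in\fixe(f)\}$ for exactly that reason, is the one genuine subtlety and you handle it correctly.
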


Let us discuss some consequences of the fourth constraint, on special patterns and special packings. First, if $L=\{0,1\}^\tau$ then $L$ has a special $\tau$-pattern and thus $\nu^*\geq \tau$, and since $\nu^*\leq\tau$ we get $\nu^*=\tau$. This shows that, for every digraph $G$,
\begin{equation}\label{eq:oneway}
\phi_m(G)=2^\tau~\Rightarrow~\nu^*=\tau. 
\end{equation}
In the next section, we will prove that the converse is true, thus showing that the notions of special pattern and special packing are very natural in our context. 

\medskip
We can actually prove a more general implication. Let $x^-$ and $x^+$ be the minimal and maximal element of $L$, and let $L'=L\setminus \{x^-,x^+\}$. Since $L'$ has no chain of size $\nu$, as mentioned above, it results from a theorem of Erd\H{o}s~\cite{E45} that $|L'|$ is at most the sum of the $\nu-1$ largest binomial coefficients ${\tau\choose k}$. Suppose that $|L'|$ reaches this sum, and let $\ell=\frac{\tau-\nu+2}{2}$. Erd\H{o}s, F{\"u}redi and Katona \cite{EFK05} proved that $L'$ is then the union of the sets ${[\tau]\choose k}$ where $k$ ranges either in the interval $[\lfloor \ell\rfloor, \lfloor \tau-\ell\rfloor]$ or in the interval $[\lceil \ell\rceil,\lceil\tau-\ell\rceil]$. In both cases, ${[\tau]\choose \lceil \ell\rceil}$ and ${[\tau]\choose \tau-\lceil \ell\rceil}$ are contained in $L'$ so, by  (\ref{eq:specialpattern1}), $L'$ has a special $\lfloor\frac{\tau}{\lceil \ell\rceil}\rfloor$-pattern. As a consequence of the fourth constraint we get $\nu^*\geq \lfloor\frac{\tau}{\lceil \ell\rceil}\rfloor$. This shows that, for all digraphs $G$, 
\[
\phi_m(G)= 2+ \sum_{k=\lfloor\frac{\tau-\nu+2}{2}\rfloor}^{\lfloor\frac{\tau+\nu-2}{2}\rfloor}{\tau \choose k}
\quad\Rightarrow\quad
\nu^*\geq\left\lfloor\frac{\tau}{\left\lceil\frac{\tau-\nu+2}{2}\right\rceil}\right\rfloor
\]
In particular, if $\phi_m(G)=2^\tau$ then $\nu=\tau$ and we deduce that $\nu^*=\tau$. We thus recover (\ref{eq:oneway}). 

\medskip
It is important to note that, contrary to $\nu$, for a fixed $\nu^*$ the transversal number $\tau$ can be arbitrarily large. For instance, if $T'_n$ denotes the digraph obtained from the transitive tournament on $n$ vertices by adding a loop on each vertex, then we have $\nu^*(T'_n)=1$ and $\nu(T'_n)=\tau(T'_n)=n$. In the next section we will exhibit a family of strongly connected digraphs with a similar property (cf. Proposition~\ref{pro:Kn}). Thus bounding $\nu^*$ is much more weaker than bounding $\nu$, and considering the case $\nu^*=1$ could be interesting. 

\medskip
So suppose that $\nu^*=1$. According to the fourth constraint, $L$ has no special $2$-pattern, and thus $L'$ has no special $2$-pattern. Hence, by (\ref{eq:specialpattern2}), this is equivalent to say that $L'\cap\overline{L'}=\emptyset$. Let $t$ be the number of $v\in[\tau]$ such that $x^-_v<x^+_v$, and let $X$ be the set of $x\in\B^\tau$ such that $x^-< x< x^+$. Then $L'\subseteq X$ and $X$ is isomorphic to $\B^t\setminus\{0,1\}$. Since $L'\cap\overline{L'}=\emptyset$ we deduce that $|L'|\leq \frac{2^t-2}{2}\leq 2^{\tau-1}-1$ so that $|L|\leq 2^{\tau-1}+1$. Thus, for every digraph $G$,  
\begin{equation}\label{eq:nu^*=1}
\nu^*=1~\Rightarrow~ \phi_m(G)\leq 2^{\tau-1}+1.
\end{equation}
This bound is tight since, for every $n\geq 1$, we have $\nu^*(T'_n)=1$, $\tau(T'_n)=n$ and $\phi_m(T'_n)=2^{n-1}+1$. Indeed, let $f:\B^n\to\B^n$ be defined by $f_n(x)=x_n$ and $f_v(x)=x_v\lor \bigwedge_{v<u\leq n}x_u$ for all $1\leq v<n$. In this way, $f$ is monotone and has $T'_n$ as interaction graph. Furthermore, if $x_n=0$ then $f(x)=x$, and since $f(1)=1$, we deduce that $f$ has $2^{n-1}+1$ fixed points. Thus $\phi_m(T'_n)\geq 2^{n-1}+1$ and from \eqref{eq:nu^*=1} we obtain $\phi_m(T'_n)=2^{n-1}+1$. For comparison, it follows from \cite[Theorem 3]{GRF16} that, in the general case, $\phi(T'_n)=2^{n-1}+2^{n-2}$.


\section{Lower bound for monotone networks}\label{sec:lowerbound}

\subsection{Optimal linear lower bound in {\boldmath$\nu$\unboldmath}}

\begin{lemma}\label{lem:lowerbound}
For every digraph $G$,
\[
\nu+1\leq\phi_m(G).
\]
\end{lemma}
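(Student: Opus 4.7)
The plan is to construct a specific monotone Boolean network $f$ with interaction graph exactly $G$ admitting a chain of $\nu+1$ fixed points. Fix a \emph{maximum} packing $C_1,\dots,C_\nu$ of $G$ with vertex sets $V_1,\dots,V_\nu$, and put $W=V_1\cup\cdots\cup V_\nu$ and $U=V\setminus W$. Since the packing is maximum, $G[U]$ has no cycle, so $U$ admits a topological order. I would then define a level map $\alpha:V\to\{0,1,\dots,\nu\}$ by $\alpha(v)=i$ for $v\in V_i$, and, propagating along the topological order on $U$, by $\alpha(v)=\min_{u\in N^-_G(v)}\alpha(u)$ for $v\in U$, with the empty minimum conventionally set to $0$.

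The candidate chain of fixed points is $x^0<x^1<\cdots<x^\nu$ defined by $x^k_v=1$ iff $\alpha(v)\leq k$; these are strictly increasing because, for every $k\in[\nu]$, $V_k$ provides at least one vertex of level exactly $k$. For each vertex $v$ of positive in-degree, set
\[
f_v(x) \;=\; \Bigl(\bigwedge_{u\in N^-_G(v),\,\alpha(u)\leq \alpha(v)} x_u\Bigr) \;\lor\; \Bigl(\bigvee_{u\in N^-_G(v),\,\alpha(u)>\alpha(v)} x_u\Bigr),
\]
and set $f_v\equiv 1$ at every source. Each $f_v$ is monotone by construction. To verify that each $x^k$ is a fixed point, note that $x^k_u=1$ iff $\alpha(u)\leq k$, so the AND block equals $1$ iff every in-neighbor it involves has $\alpha(u)\leq k$. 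Its largest participating level is exactly $\alpha(v)$---realized by the predecessor of $v$ in $C_{\alpha(v)}$ when $v$ is a cycle vertex, and by the in-neighbor achieving the minimum when $v\in U$---so the AND evaluates to the desired value $x^k_v$. When $k<\alpha(v)$ the OR block vanishes because every $u$ in it has $\alpha(u)>\alpha(v)>k$; when $k\geq\alpha(v)$ the AND is already $1$. Thus $f(x^k)=x^k$ for all $k$.

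The step I expect to be the main obstacle is verifying that the interaction graph of $f$ coincides with $G$---equivalently, that $f_v$ genuinely depends on every $u\in N^-_G(v)$. For an in-neighbor $u$ in the AND block, setting the remaining AND-variables to $1$ and all OR-variables to $0$ collapses $f_v$ to $x_u$; for an in-neighbor $u$ in the OR block, setting the other OR-variables to $0$ and at least one AND-variable to $0$ similarly collapses $f_v$ to $x_u$. Both reductions work because the AND block is non-empty for every vertex with in-neighbors: it always contains either the predecessor in $C_{\alpha(v)}$ (cycle-vertex case) or the in-neighbor of $v$ achieving the defining minimum ($U$-vertex case). Combining everything, $f$ is a monotone Boolean network with interaction graph $G$ having $x^0,\ldots,x^\nu$ among its fixed points, yielding $\phi_m(G)\geq\nu+1$.
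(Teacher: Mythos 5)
Your proof is correct and follows the same overall strategy as the paper's: build one explicit monotone network whose fixed points contain the chain of indicator vectors of a nested family of vertex sets, the $k$-th set ``switching on'' the first $k$ cycles of the packing together with the vertices they control. The realizations differ in two places. For the layering, the paper partitions the vertices into reachability classes $U_0,U_1,\dots,U_\nu$ (vertices unreachable from the packing, then vertices reachable from $C_p$ after deleting the later cycles and the earlier classes), while you propagate a level function $\alpha$ through the acyclic remainder $G\setminus(V_1\cup\dots\cup V_\nu)$ by taking minima over in-neighbors; note that your version therefore needs the packing to be \emph{maximum} (so that the remainder is acyclic and the topological propagation is well defined), whereas the paper's works for an arbitrary packing of size $\nu$. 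For the local rule, the paper uses threshold functions $f_v(x)=1$ iff $\sum_{u\in N^-_G(v)}x_u\geq\theta_v$ with $\theta_v=|N^-_G(v)\cap(U_0\cup\dots\cup U_p)|$, while you use a conjunction over the low-level in-neighbors disjoined with the high-level ones; your form has the small advantage of making the verification that the interaction graph is exactly $G$ completely explicit (each $f_v$ collapses to a single literal under a suitable partial assignment, using that the conjunctive block is nonempty at every non-source), a point the paper leaves as ``easy to check.'' Both arguments are elementary and of comparable length, and neither yields anything beyond the other.
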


\begin{proof}
Suppose that the vertex set of $G$ is $[n]$. Let $C_1,\dots,C_\nu$ be vertex-disjoint cycles in $G$, and for every $p\in[\nu]$, let $V_p$ be the vertices of $C_p$. Let $U_0$ be the set of vertices that cannot be reached from one of these cycles ($U_0$ may be empty). Let $U_1$ be the set of vertices reachable from $C_1$ in $G\setminus (V_2\cup \dots\cup V_\nu)$. For $2\leq p\leq \nu$, let $U_p$ be the set of vertices reachable from $C_p$ in $G\setminus(U_1\cup\dots\cup U_{p-1}\cup V_{p+1}\cup \dots\cup V_\nu)$. In this way $\{U_0,U_1,\dots,U_\nu\}$ is a partition of the vertex set of $G$. Note that every $G[U_p]$ is of minimum in-degree at least one. 

\medskip
For every $0\leq p\leq\nu$ and every $v\in U_p$ we set 
\[
\theta_v=|N^-_G(v)\cap (U_0\cup\dots\cup U_p)|.
\]
Let $f:\B^n\to\B^n$ be defined for every $v\in[n]$ by: 
\[
f_v(x)=
\begin{cases}
1&\text{if }\sum_{u\in N^-_G(v)} x_u\geq\theta_v.\\
0&\text{otherwise.}
\end{cases}
\]
It is easy to check that $f$ is monotone and that $G$ is its interaction graph. For $0\leq q\leq \nu$, let $x^q\in\B^n$ be defined for every $v\in[n]$ by:  
\[
x^q_v=
\begin{cases}
1&\text{if }v\in U_0\cup \cdots\cup U_q\\
0&\text{otherwise.}
\end{cases}
\]
We claim that each $x^q$ is a fixed point of $f$. So let $0\leq q\leq\nu$, and let $v\in U_p$ with $0\leq p\leq\nu$. Then 
\[
\begin{array}{l}
\sum_{u\in N^-_G(v)} x^q_u=|N^-_G(v)\cap(U_0\cup\dots\cup U_q)|.
\end{array}
\]
Hence if $0\leq p\leq q$ this sum is at least $\theta_v$, and thus $f_v(x^q)=1=x^q_v$. If $q<p$ then this sum is at most $\theta_v-1$, because $N^-_G(v)\cap U_p\neq\emptyset$ (by construction $G[U_p]$ is of minimum in-degree at least one), and we deduce that $f_v(x^q)=0=x^q_v$. Thus $x^0,\dots,x^\nu$ are distinct fixed points of $f$.
\end{proof}

We will now prove that this lower bound is tight. For that we first identify a rather general class of networks without $\nu+2$ fixed points. Let $f:\B^n\to\B^n$ be a Boolean network with interaction graph $G$. We say that $f$ is an {\em and-or-network} if for every $v\in [n]$ we have either $f_v(x)=\wedge_{u\in N^-_G(v)}x_u$ for all $x\in\B^n$ or $f_v(x)=\vee_{u\in N^-_G(v)}x_u$ for all $x\in\B^n$ (by convention, the empty conjunction is $1$ and the empty disjunction is $0$). In the first case we say that $v$ is an {\em and-vertex} and in the second case we say that $v$ is an {\em or-vertex}. We say that two vertex-disjoint cycle $C_1$ and $C_2$ of $G$ are {\em independent} if $G$ has no arc from $C_1$ to $C_2$ and no arc from $C_2$ to~$C_1$. 

\begin{lemma}\label{lem:andortotalorder}
Let $f$ be an and-or-network with interaction graph $G$. If $G$ has no two independent cycles, then $f$ has at most $\nu+1$ fixed points. 
\end{lemma}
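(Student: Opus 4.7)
The plan is to show that under the hypothesis that $G$ has no two independent cycles, the set $\fixe(f)$ is totally ordered, so $\fixe(f)$ is a chain and Lemma~\ref{lem:totalorder} immediately yields $|\fixe(f)|\leq \nu+1$ (since an and-or-network is monotone, every arc is positive, and a chain of fixed points of size $\ell+1$ forces $\ell$ vertex-disjoint cycles in $G$).

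To prove that any two fixed points are comparable, I would proceed by contradiction. Suppose $x,y\in\fixe(f)$ are incomparable and set
\[
V_1=\{v:x_v>y_v\},\qquad V_2=\{v:x_v<y_v\},
\]
both non-empty and obviously disjoint. Because $f$ is monotone, the argument already used in the proof of Lemma~\ref{lem:pattern} applies unchanged: for every $v\in V_i$, the inequality $f_v(x)\neq f_v(y)$ forces some in-neighbor $u$ of $v$ to satisfy $x_u>y_u$ (if $v\in V_1$) or $x_u<y_u$ (if $v\in V_2$). Hence $G[V_i]$ has minimum in-degree at least one, so $G[V_i]$ contains a cycle $C_i$, and $C_1,C_2$ are vertex-disjoint.

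The heart of the proof, and where the and-or structure is used, is to show that $C_1$ and $C_2$ are independent. Suppose for contradiction that $G$ has an arc $uv$ with $u\in C_1\subseteq V_1$ and $v\in C_2\subseteq V_2$. Then $x_u=1$, $y_u=0$, $x_v=0$, $y_v=1$. If $v$ is an or-vertex, then from $x_v=f_v(x)=\bigvee_{w\in N^-_G(v)}x_w=0$ we would need $x_u=0$, contradicting $x_u=1$. If instead $v$ is an and-vertex, then from $y_v=f_v(y)=\bigwedge_{w\in N^-_G(v)}y_w=1$ we would need $y_u=1$, contradicting $y_u=0$. The symmetric argument rules out arcs from $C_2$ to $C_1$, so $C_1$ and $C_2$ are two independent cycles of $G$, contradicting the hypothesis. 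Thus $\fixe(f)$ is a chain, and Lemma~\ref{lem:totalorder} concludes.

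The main difficulty, and the step where the and-or hypothesis is essential, is the independence argument: in an arbitrary monotone network the cycles $C_1,C_2$ obtained above may very well be connected by arcs (this is precisely what allows general monotone networks to carry up to $2^\nu$ fixed points), and only the rigid local behavior of and- and or-gates lets a single misaligned in-neighbor $u$ of $v$ propagate a direct contradiction. Everything else is a straightforward combination of monotonicity arguments already developed earlier in the paper.
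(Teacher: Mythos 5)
Your proof is correct and follows essentially the same route as the paper's: extract disjoint cycles $C_1,C_2$ from the sets where the two incomparable fixed points disagree, use the and-or structure of $f_v$ at the head of any connecting arc to derive a contradiction, and conclude via Lemma~\ref{lem:totalorder} that the chain $\fixe(f)$ has size at most $\nu+1$. The only cosmetic difference is that the paper rules out all arcs between the two disagreement sets $V_1$ and $V_2$, while you rule out only arcs between the cycles themselves, which suffices for independence.
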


\begin{proof}
Suppose that $x^1$ and $x^2$ are two fixed points of $f$. Let $I^1$ be the set of vertices $v$ such that $x^1_v>x^2_v$, and let $I^2$ be the set of vertices $v$ such that $x^2_v>x^1_v$. Suppose that $x^1$ and $x^2$ are incomparable. Then both $I^1$ and $I^2$ are not empty and it is easy to check that both $G[I_1]$ and $G[I_2]$ are of minimum in-degree at least one. Thus $G[I_1]$ contains a cycle $C_1$ and $G[I_2]$ contains a cycle $C_2$. Furthermore, there is no arc $uv$ with $u\in I_1$ and $v\in I_2$. Suppose indeed that such an arc $uv$ exists. Since $x^1_u=1$ and $x^1_v=0=f_v(x^1)$, $v$ cannot be an or-vertex, and since $x^2_u=0$ and $x^2_v=1=f_v(x^2)$, $v$ cannot be an and-vertex, a contradiction. Thus there is no arc from $I_1$ to $I_2$ and by symmetry there is no arc from $I_2$ to $I_1$. Thus $C_1$ and $C_2$ are two independent cycles of $G$. We have thus proved the following: {\em If $f$ has two incomparable fixed points then $G$ has two independent cycles.} Therefore, if $G$ has no two independent cycles then $\fixe(f)$ is a chain, which is, by Lemma \ref{lem:totalorder}, of size at most $\nu+1$.
\end{proof}

\begin{lemma}\label{lem:degree2}
If $G$ is a digraph with maximum in-degree at most two and with no two independent cycles, then $\phi_m(G)=\nu+1$.
\end{lemma}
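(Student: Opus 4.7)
The plan is to combine the lower bound already established in Lemma~\ref{lem:lowerbound} with a reduction from arbitrary monotone networks to and-or-networks, so that Lemma~\ref{lem:andortotalorder} supplies the matching upper bound. Concretely, Lemma~\ref{lem:lowerbound} already gives $\phi_m(G)\geq \nu+1$, so only the inequality $\phi_m(G)\leq \nu+1$ needs to be proved.

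For the upper bound, let $f$ be any monotone Boolean network with interaction graph $G$, and let $v\in [n]$. Since the in-degree of $v$ in $G$ equals the number of variables on which $f_v$ genuinely depends, the hypothesis that the maximum in-degree is at most two leaves only three cases for $f_v$. First, if $v$ is a source, then $f_v$ is constant: if the constant is $1$ it equals the empty conjunction $\bigwedge_{u\in\emptyset}x_u$, and if it is $0$ it equals the empty disjunction $\bigvee_{u\in\emptyset}x_u$. Second, if $v$ has a unique in-neighbor $u$, monotonicity together with genuine dependence on $x_u$ forces $f_v(x)=x_u$, which simultaneously equals $\bigwedge_{u'\in N^-_G(v)}x_{u'}$ and $\bigvee_{u'\in N^-_G(v)}x_{u'}$. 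Third, if $v$ has exactly two in-neighbors $u,w$, then the only monotone Boolean functions on $\B^2$ that depend on both arguments are $x_u\wedge x_w$ and $x_u\vee x_w$; hence $f_v$ is either the conjunction or the disjunction over $N^-_G(v)$.

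Thus, in all three cases we can classify $v$ as an and-vertex or as an or-vertex, and $f$ turns out to be an and-or-network in the sense of Lemma~\ref{lem:andortotalorder}. Since $G$ contains no two independent cycles, that lemma applies directly and yields $|\fixe(f)|\leq \nu+1$. Taking the maximum over all monotone networks with interaction graph $G$ gives $\phi_m(G)\leq \nu+1$, which together with the lower bound completes the proof.

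No step is genuinely hard here: the content of the argument is the elementary observation that on at most two variables the only monotone Boolean functions that honestly depend on all their inputs are $\wedge$ and $\vee$, which is precisely what makes the in-degree bound of two the natural regime for invoking Lemma~\ref{lem:andortotalorder}. The only point that deserves explicit mention is the treatment of sources (where constancy of $f_v$ is reconciled with the and-or-network definition via the empty conjunction/disjunction convention), so I would state that case explicitly to avoid any ambiguity about vertices of in-degree zero.
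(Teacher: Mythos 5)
Your proposal is correct and follows exactly the paper's argument: the lower bound from Lemma~\ref{lem:lowerbound}, the upper bound from Lemma~\ref{lem:andortotalorder}, and the observation that a monotone network whose interaction graph has maximum in-degree at most two is necessarily an and-or-network. The paper states this last observation without proof, whereas you spell out the three-case verification (including the source case via the empty conjunction/disjunction convention), which is a reasonable elaboration but not a different route.
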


\begin{proof}
This results from Lemma~\ref{lem:lowerbound}, Lemma~\ref{lem:andortotalorder} and the following basic observation: if $G$ is of maximum in-degree at most two, then every monotone Boolean network with $G$ as interaction graph is an and-or-network. 
\end{proof}

Let us now exhibit, for every $k$, a strongly connected digraph $G$ verifying the conditions of the previous lemma and such that $\nu=k$. For every $n\geq 1$, let $\calK_n$ be the directed graph defined as follows. The set of vertices is $N\times N$, with $N=\{0,1,\dots,n-1\}$, and the set of arcs is defined as follows, where sums are computed modulo $n$:
\begin{enumerate}
\item
for all $i,j\in N$, there is an arc from $(i,j)$ to $(i,j+1)$. 
\item
for all $i,j\in N$ with $i\neq j$, there is an arc from $(i,i)$ to $(j,i)$.  
\end{enumerate}

\begin{figure}
\begin{center}
\newcounter{aux}
\newlength{\radio}

\def\n{4}
\begin{tikzpicture}
\setlength{\radio}{3.5cm}
\foreach \x [evaluate =\x as \xx using {int(\x-1)}] in {1,...,\n}{
	\foreach \y [evaluate = \y as \yy using {int(mod(\x+\y-2,\n))}] in {1,...,\n}{
		\ifnum \xx=\yy
			\node[main node,ultra thick] (\xx\yy) at ({(1+2*\n*\x+2*\y)*180/(\n*\n)-90}:\radio) {\xx,\yy};
		\else
			\node[main node] (\xx\yy) at ({(1+2*\n*\x+2*\y)*180/(\n*\n)-90}:\radio) {\xx,\yy};
		\fi
	}
}
\foreach\x [evaluate =\x as \xx using {int(\x-1)}] in {1,...,\n}{
	\foreach \y  [evaluate = \y as \yy using {int(mod(\x+\y-2,\n))}] [evaluate=\y as \nyy using {int(mod(\y+\x-1,\n))}] in {1,...,\n}{
		\ifnum \xx=\yy
		\else  \path[arco](\xx\xx) edge (\yy\xx);\fi
		\ifnum \nyy=\xx
			\path[arco,ultra thick,bend left=65](\xx\yy) edge (\xx\nyy);
		\else
			\path[arco,ultra thick,bend right=10](\xx\yy) edge (\xx\nyy);
		\fi
	}
}
\end{tikzpicture}
{\caption{$\calK_4$.}}
\end{center}
\end{figure}

\begin{proposition}\label{pro:Kn}
For all $n\geq 1$, we have $\nu(\calK_n)=\tau(\calK_n)=n$ and $\nu^*(\calK_n)=1$. Furthermore, $\calK_n$ is of maximum in-degree at most two and no two independent cycles, so that $\phi_m(\calK_n)=n+1$. 
\end{proposition}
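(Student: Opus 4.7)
The plan is to verify each assertion, with $\nu^*(\calK_n) = 1$ being the real content. For the easy parts: the $n$ horizontal cycles $H_i := (i,0)(i,1)\cdots(i,n-1)(i,0)$ (formed from type-1 arcs) are vertex-disjoint, giving $\nu \geq n$, while the diagonal $D = \{(i,i) : i \in N\}$ is a feedback vertex set because every type-2 arc starts at a diagonal and every $H_i$ is broken at $(i,i)$, giving $\tau \leq n$; hence $\nu = \tau = n$. The in-degree of $(i,i)$ is $1$ (only in-neighbor $(i, i-1)$) and that of $(i,j)$ with $i \neq j$ is exactly $2$ (in-neighbors $(i, j-1)$ and $(j, j)$), so $\calK_n$ has maximum in-degree $2$.

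Next, I would prove a short structural lemma on cycles of $\calK_n$. Since every type-2 arc emanates from a diagonal vertex, every cycle meets $D$. If the diagonal vertices of a cycle $C$ in cyclic order are $(a_1, a_1), \dots, (a_p, a_p)$ (distinct $a_k$), then for $p \geq 2$ the cycle is forced to take $(a_k, a_k) \to (a_{k+1}, a_k) \to (a_{k+1}, a_k+1) \to \cdots \to (a_{k+1}, a_{k+1})$ between two consecutive diagonals, since a type-1 detour from $(a_k, a_k)$ would eventually loop back to $(a_k, a_k)$ and force $C = H_{a_k}$. Hence $C$ consists exactly of the vertices $\{(a_{k+1}, y) : y \in [a_k, a_{k+1}]\}$ (cyclic intervals), and $p = 1$ corresponds to $C = H_{a_1}$. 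Now let $C, C'$ be two vertex-disjoint cycles with diagonal-index sets $A, B$, necessarily disjoint and nonempty. Pick any $b \in B$; since $b \notin A$, it lies in the strict interior of some cyclic interval $[a_k, a_{k+1}]$ of $C$'s segmentation, so $v := (a_{k+1}, b) \in C$. As $(b, b) \in C'$ and $a_{k+1} \neq b$, the type-2 arc $(b, b) \to v$ goes from $C'$ to $C$, so $\calK_n$ has no two independent cycles. Combined with the in-degree bound, Lemma~\ref{lem:degree2} yields $\phi_m(\calK_n) = \nu + 1 = n + 1$.

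Finally, to show $\nu^*(\calK_n) = 1$, I would argue that no special packing of size $2$ exists. Every vertex of $\calK_n$ has in-degree at least $1$ (via its type-1 predecessor), so there is no source, and the special-packing condition for $\{C, C'\}$ demands a principal path from $C$ to $v$ whenever one exists from $C'$ to $v \in C$. I would take the same $v = (a_{k+1}, b) \in C$ from the previous paragraph: the arc $(b,b) \to v$ is a principal path from $C'$ to $v$ (no internal vertex, and the arc belongs neither to $C$, since its tail has first coordinate $b \notin A$, nor to $C'$, since its head has first coordinate $a_{k+1} \notin B$). However, the only in-arcs of $v$ are the type-1 arc from $(a_{k+1}, b-1)$, which lies in $C$'s segment in $H_{a_{k+1}}$ and is therefore an arc of $C$, and the type-2 arc from $(b, b) \in C'$; and the only out-arc of $v$ is the type-1 arc to $(a_{k+1}, b+1)$, also in $C$. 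Thus any principal path ending at $v$ of length $\geq 2$ would pass through the internal vertex $(b, b) \in C'$, any length-$1$ principal path ending at $v$ would start at $(b, b) \in C'$, and no principal cycle based at $v$ is possible since its first arc would be in $C$. Hence no principal path from $C$ to $v$ exists, contradicting the special-packing condition. The delicate step — and the main obstacle — is this last dissection, which crucially uses both the in-degree-$2$ property and the fact that the one ``free'' in-arc to $v$ comes from $C'$.
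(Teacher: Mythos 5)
Your proof is correct. For the computations of $\nu(\calK_n)=\tau(\calK_n)=n$ and of the in-degrees you argue exactly as the paper does, and for the essential claim $\nu^*(\calK_n)=1$ you use the same key observation: the head $v$ of an arc joining two disjoint cycles has in-degree two, its other in-arc is an arc of the cycle containing $v$, so the joining arc is the unique principal path ending at $v$, and it originates in the wrong cycle (while $\calK_n$ has no sources). Where you genuinely differ is in showing that no two vertex-disjoint cycles are independent. The paper argues by contradiction with an extremal choice: it picks diagonal vertices $(i_1,i_1)\in F_1$ and $(i_2,i_2)\in F_2$ with $|i_1-i_2|$ minimum, uses the assumed independence to force $F_1$ to enter row $i_1$ through some $(j,j)$ with $i_2<j<i_1$, and contradicts minimality. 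You instead prove a complete structural description of the cycles of $\calK_n$ --- each is a cyclic concatenation of segments $(a_k,a_k)\to(a_{k+1},a_k)\to\cdots\to(a_{k+1},a_{k+1})$ indexed by its diagonal anchors --- and then read off an explicit arc $(b,b)\to(a_{k+1},b)$ from $C'$ to $C$. Your route is longer but constructive, and it pays for itself: the same vertex $v=(a_{k+1},b)$ is reused verbatim in the $\nu^*$ step, whereas the paper needs a separate (if short) identification of the relevant arc there. One small point you rely on tacitly and should record: the cyclic intervals $[a_1,a_2],\dots,[a_p,a_1]$ cover all of $N$ (each has positive length and their lengths sum to a multiple of $n$), which is what guarantees that every $b\in B\setminus A$ lies in the strict interior of some interval and hence that $(a_{k+1},b)$ really is a vertex of $C$.
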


\begin{proof}
Foremost, each vertex $(i,j)$ of $\calK_n$ with $i\neq j$ has exactly two in-neighbors, namely $(i,j-1)$ and $(j,j)$ (sums and differences are computed modulo $n$). Also, each vertex $(i,i)$ has a unique in-neighbor, namely $(i,i-1)$. Thus $\calK_n$ has maximal in-degree at most two. 

\medskip
For each $i\in N$, the set of vertices $\{i\}\times N$ induces a cycle of length $n$, denoted $C_i$. These cycles $C_i$ are obviously pairwise vertex-disjoint, thus $\nu(\calK_n)\geq n$. Furthermore, since each vertex of $C_i$ distinct from $(i,i)$ has a unique out-neighbor, we deduce that every cycle $C$ meeting $C_i$ contains the vertex $(i,i)$. We deduce that the set of vertices $I=\{(i,i)\,|\,i\in N\}$ is a feedback vertex set. Thus $\tau(\calK_n)\leq n$, and we deduce that $\nu(\calK_n)=n=\tau(\calK_n)$.  

\medskip
We will prove that $\calK_n$ has no two independent cycles. Let $F_1$ and $F_2$ be two vertex-disjoint cycles of $\calK_n$. Suppose, for a contradiction, that $F_1$ and $F_2$ are independent. Let $(i_1,i_1)\in F_1\cap I$ and $(i_2,i_2)\in F_2\cap I$ be such that $|i_1-i_2|$ is minimum. Without loss of generality, we can suppose that $i_2<i_1$. Since there is an arc from $(i_2,i_2)$ to $(i_1,i_2)$, $(i_1,i_2)$ is not a vertex of $F_1$, thus $F_1\neq C_{i_1}$ and we deduce that $F_1$ contains an arc from a vertex $(j,j)$ to $(i_1,j)$, with $j\neq i_1,i_2$. Let $P$ be the path from $(i_1,j)$ to $(i_1,i_1)$ contained in $C_{i_1}$. Clearly, this path $P$ is also contained in $F_1$. Thus, if $0\leq j<i_2$ or $i_1<j\leq n-1$, then $(i_1,i_2)$ is a vertex of $P$, a contradiction. We deduce that $i_2<j<i_1$. Thus $|j-i_2|<|i_1-i_2|$ and since $(j,j)\in F_1\cap I$, this contradicts our choice of $(i_1,i_1)$ and $(i_2,i_2)$. We deduce that $\calK_n$ has no two independent cycles. Thus, by Lemma~\ref{lem:degree2}, $\phi_m(\calK_n)=\nu+1$. 

\medskip
Let us now prove that $\nu^*=1$. Let $F_1$ and $F_2$ be two vertex-disjoint cycles. Since they are not independent, there exists an arc arc $uv$ between the two cycles, say from $F_1$ to $F_2$. This arc then constitutes a principal path from $F_1$ to $F_2$. Since $v$ has in-degree exactly $2$, every principal path ending in $v$ contains $uv$. Thus $uv$ is actually the unique principal path ending in $v$, and thus $\{F_1,F_2\}$ cannot be a special packing.
\end{proof}

\subsection{Optimal exponential lower bound in {\boldmath$\nu^*$\unboldmath}}

\begin{lemma}\label{lem:lowerbound2}
For every digraph $G$, 
\[
2^{\nu^*}\leq \phi_m(G). 
\]
More precisely, there exists a monotone Boolean network $f$ with $G$ as interaction graph such that $f$ has at least $2^{\nu^*}$ fixed points and $f_v=\mathrm{cst}=0$ for every source $v$ of~$G$.  
\end{lemma}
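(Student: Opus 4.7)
The plan is to construct, from a special packing $C_1,\dots,C_k$ of size $k=\nu^*$, a monotone Boolean network $f$ with interaction graph $G$ whose set of fixed points contains $2^k$ distinct elements, one per subset $S\subseteq [k]$, and such that $f_v\equiv 0$ on every source.

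The definition of $f$ goes by cases on the vertices. For every source $v$, put $f_v\equiv 0$. For every vertex $v$ lying on some packing cycle $C_\ell$, with $u$ the predecessor of $v$ on $C_\ell$ and $w_1,\dots,w_r$ its other in-neighbors in $G$, put $f_v(x)=x_u$ if $r=0$ and $f_v(x)=x_u\lor(x_{w_1}\land\cdots\land x_{w_r})$ if $r\ge 1$. For every remaining vertex $v$ (non-source, outside all packing cycles), put $f_v(x)=\bigwedge_{u\in N^-_G(v)}x_u$. A direct check shows that $f$ is monotone and that each $f_v$ genuinely depends on every declared in-neighbor (for the OR--AND case, fix $x_u=0$ and set $x_{w_k}=1$ for $k\neq j$ to isolate $x_{w_j}$), so the interaction graph is exactly $G$.

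For each $S\subseteq[k]$, set $D_S=\bigcup_{\ell\in S}V(C_\ell)$ and define $x^S$ as the limit of the following monotone iteration: start from $y^0=e_{D_S}$, and let $y^{t+1}_v=[\ell\in S]$ for every $v\in V(C_\ell)$ while $y^{t+1}_v=f_v(y^t)$ for every other $v$. Since the sequence is nondecreasing and $\B^n$ is finite, the iteration stabilises at some $x^S$, which has $x^S_v=[\ell\in S]$ on every packing-cycle vertex by construction; hence distinct $S$'s give distinct $x^S$'s. The identity $f_v(x^S)=x^S_v$ is immediate for sources (both sides are $0$), for vertices outside the packing (by stability of the iteration), and for $v\in V(C_\ell)$ with $\ell\in S$ (the predecessor carries value $1$, forcing the OR to $1$).

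The only nontrivial case, and the main obstacle, is to verify $f_v(x^S)=0$ when $v\in V(C_\ell)$, $\ell\notin S$, and $r\ge 1$: since the predecessor has value $0$, this reduces to finding an in-neighbor $w_j$ of $v$ with $x^S_{w_j}=0$. Suppose to the contrary that every $w_j$ has $x^S_{w_j}=1$. A key preliminary observation is that any cycle of $G$ vertex-disjoint from the packing is forced to value $0$ in $x^S$ (the AND rule propagates a $0$ around it), so any non-packing vertex with $x^S=1$ lies on no such cycle; therefore a backward walk from $w_j$ that always follows an in-neighbor of value $1$ cannot loop and must reach some packing-cycle vertex of $D_S$. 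Reversing this walk and appending the arc $w_jv$ (which is non-packing because $w_j\ne u$) produces a principal path in $G$ from some $C_{\ell'}$ with $\ell'\in S$ (hence $\ell'\ne\ell$) to $v$. Invoking the special-packing hypothesis yields a second principal path, either from $C_\ell$ itself or from a source, to $v$. Walking forward along this second path, the initial vertex has $x^S$-value $0$ (a source by $f_s\equiv 0$, a vertex of $C_\ell$ by $\ell\notin S$), and the AND rule at every non-packing internal vertex transports that $0$ step by step until it reaches an in-neighbor $w_{j'}$ of $v$ with $x^S_{w_{j'}}=0$, the desired contradiction. This step is the combinatorial core of the argument and is exactly where the special-packing hypothesis is essential.
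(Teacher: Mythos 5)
Your proposal is correct and follows essentially the same route as the paper: the network (predecessor-OR-AND on packing cycles, constant $0$ on sources, AND elsewhere) is identical, and the crucial case --- a packing vertex $v$ on a cycle outside $S$ whose non-predecessor in-neighbours all carry value $1$ --- is resolved exactly as in the paper, by extracting a principal path from a cycle of $S$ to $v$ and letting the special-packing hypothesis supply a second principal path along which a $0$ propagates to $v$. The only (cosmetic) difference is that you build the fixed point as the limit of a clamped monotone iteration and recover the first principal path by a backward walk through value-$1$ vertices, whereas the paper defines the same point explicitly via a closure set $I\cup J$ and reads the path off that construction.
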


\begin{proof}
Suppose that the vertex set of $G$ is $V=[n]$. Let $\mathcal{P}$ be a special packing of size $\nu^*$. Let $U$ be the set of vertices that belong to some cycle of the packing, and let $S$ be the set of sources of $G$. For every $v\in U$, we denote by $v'$ the unique in-neighbor of $v$ that belongs to the packing. Let $f:\B^n\to\B^n$ be defined by
\[
\begin{array}{ll}
f_v(x)=x_{v'}\lor \bigwedge_{u\in N^-_G(v)\setminus\{v'\}}x_u&\quad\forall v\in U\\[2mm]
f_v(x)=0&\quad\forall v\in S\\[2mm]
f_v(x)=\bigwedge_{u\in N^-_G(u)}x_u&\quad\forall v\in V\setminus (U\cup S).
\end{array}
\]
Clearly, $f$ is monotone and its interaction graph is $G$. 

\medskip
Let $\mathcal{I}\subseteq \calP$ be a subset of cycles in the packing and let $I$ be the set of vertices that belongs to a cycle of $\mathcal{I}$. Let $J=\{v_1,v_2,\dots,v_r\}$ be a maximal sequence of vertices in $V\setminus I$ such that $N^-_G(v_1)\subseteq I$ and $N^-_G(v_i)\subseteq I\cup\{v_1,\dots,v_{i-1}\}$ for all $1<i\leq r$ ($J$ may be empty). By construction, every cycle of $G$ intersecting $J$ also intersects $I$, so $J\cap U=\emptyset$. Let us defined $x^{\mathcal{I}}\in\B^n$ by
\[
x^\mathcal{I}_v=
\begin{cases}
1&\text{if }v\in I\cup J\\
0&\text{otherwise.}
\end{cases}
\]

\medskip
Suppose that $\mathcal{I}'\subseteq \calP$ and $\mathcal{I}'\neq\mathcal{I}$. If $C$ is a cycle of $\mathcal{I}'\setminus \mathcal{I}$ and $v$ is a vertex of $C$ then $v\in U\setminus I$ and since  $J\cap U=\emptyset$ we have $x^{\mathcal{I}}_v=0$. Since $x^{\mathcal{I'}}_v=1$, this show that $x^\mathcal{I}\neq x^{\mathcal{I}'}$, and if $\mathcal{I}\setminus \mathcal{I}'\neq\emptyset$ we obtain the same conclusion with similar arguments. Thus the $2^{\nu^*}$ possible choices of $\mathcal{I}$ gives $2^{\nu^*}$ distinct points $x^{\mathcal{I}}$. It is thus sufficient to prove that $x^{\mathcal{I}}$ is a fixed point. 

\medskip
To simplify notations we set $x=x^\mathcal{I}$. First, for all $v\in I$ we have $x_{v'}=1$ and thus $f_v(x)=1=x_v$. Then, with a straightforward induction on $i=1,\dots,r$, we prove that $f_{v_i}(x)=1=x_{v_i}$. Consequently, $f_v(x)=1=x_v$ for all $v\in I\cup J$. Furthermore, if $v\in S$ then $f_v(x)=0$ by definition, so $v\not\in I\cup J$ and thus $f_v(x)=0=x_v$. Next, let $v\in V\setminus (U\cup J\cup S)$. If $f_v(x)=1$, then $N^-_G(v)\subseteq I\cup J$ and this contradicts the maximality of $J$, thus $f_v(x)=0=x_v$. Finally, suppose that $v\in U\setminus I$, so that $x_v=0$, and let $C$ be the cycle of $\calP$ containing $v$. If $N^-_G(v)\cap (I\cup J)=\emptyset$, then $f_v(x)=0=x_v$. So suppose that there exists $u\in N^-_G(v)\cap (I\cup J)$. Then there exists a shortest path from $I$ to $u$ with only vertices in $I\cup J$, and with the arc $uv$ this gives a principal path $P$ from a cycle of $\mathcal{I}$ to $v$. Since $\mathcal{P}$ is a special packing, there exists another principal path $P'=u_1,\dots,u_p,v$ such that either $u_1$ is a source or $u_1$ belongs to $C$. In both cases, $x_{u_1}=0$, and since the internal vertices $u_i$, $1<i\leq p$, are not in $U\cup S$, we deduce, by induction on $i$, that $f_{u_i}(x)=0=x_{u_i}$. Thus $x_{u_p}=0$ and since $u_p\neq v'$ we have $f_v(x)=0=x_v$. 
\end{proof}

The lower bound $2^{\nu^*}$ gives the other direction of the implication (\ref{eq:oneway}) and we thus get the following characterization. 

\begin{theorem}
For every digraph $G$,
\begin{equation*}
\phi_m(G)=2^\tau~\iff~\nu^*=\tau. 
\end{equation*}
\end{theorem}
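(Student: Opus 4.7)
The plan is to observe that both directions follow essentially immediately from results established earlier in the excerpt, so the proof is really just a matter of assembling the two inequalities on $\phi_m(G)$ around the value $2^\tau$.

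For the forward direction, I would simply cite implication~(\ref{eq:oneway}), which states that $\phi_m(G)=2^\tau$ implies $\nu^*=\tau$. Recall how that was argued: if $f$ attains $\phi_m(G)=2^\tau$ fixed points, then by Theorem~\ref{thm:upperbound} (condition~1 plus Lemma~\ref{lem:iso}) its fixed-point lattice is isomorphic to a sublattice $L\subseteq\B^\tau$ of size $2^\tau$, hence $L=\B^\tau$. But $\B^\tau$ contains the special $\tau$-pattern given by $(e_1,\dots,e_\tau),(\overline{e_1},\dots,\overline{e_\tau})$, so condition~4 of Theorem~\ref{thm:upperbound} forces $\nu^*\geq\tau$, and since trivially $\nu^*\leq\tau$ we conclude $\nu^*=\tau$.

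For the converse, I would combine Lemma~\ref{lem:lowerbound2} with the classical feedback bound. Assume $\nu^*=\tau$. Lemma~\ref{lem:lowerbound2} gives $2^{\nu^*}\leq\phi_m(G)$, hence $2^\tau\leq\phi_m(G)$. On the other hand, since every monotone Boolean network is in particular a Boolean network, $\phi_m(G)\leq\phi(G)\leq 2^\tau$ by the feedback bound stated in the introduction (or, equivalently, by Theorem~\ref{thm:erdos} together with $\nu\leq\tau$). These two inequalities together yield $\phi_m(G)=2^\tau$.

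There is no real obstacle here: the hard work has already been done in Lemma~\ref{lem:two_strongly_independent_cycles} (for the upper bound that gave implication~(\ref{eq:oneway})) and in Lemma~\ref{lem:lowerbound2} (for the matching lower bound). The only thing worth being careful about in writing the proof is being explicit that the chain $2^\tau\leq\phi_m(G)\leq 2^\tau$ actually forces equality throughout, and pointing to the two ingredients rather than reproving them.
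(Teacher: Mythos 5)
Your proof is correct and follows exactly the paper's own route: the forward direction is implication~(\ref{eq:oneway}) obtained from the special-pattern condition of Theorem~\ref{thm:upperbound}, and the converse combines the lower bound $2^{\nu^*}\leq\phi_m(G)$ of Lemma~\ref{lem:lowerbound2} with the feedback bound $\phi_m(G)\leq 2^\tau$. Nothing is missing.
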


The most simple example of digraph such that $\nu^*=\tau$ is the union of $k$ disjoint cycles. See Figures~\ref{fig:nustar} for families of strongly connected digraphs with this property.

\subsection{Introduction of the circumference in the lower bound}

In this section, we prove that if a digraph $G$ has many vertex-disjoint {\em short} cycles, then we can construct a monotone Boolean network with interaction graph $G$ and with many fixed points. 

\medskip
For that we use the lower bound $2^{\nu^*}$ and a result about domination in digraphs. In a digraph $G=(V,A)$, a {\em dominating set} is a set of vertices $D\subseteq V$ such that every vertex in $V\setminus D$ has an in-neighbor in $D$. Using probabilistic arguments, Lee \cite{L98} proved the following result: {\em If $G$ is of minimum in-degree at least one, then $G$ has a dominating set of size at most $3|V|/2$}. We use this result to prove the next lemma, which is actually a generalization (we recover the result of Lee by taking $U=V$, $k=|V|$, $\ell=1$, and  by assuming that $G$ has minimal in-degree at least one).

\begin{lemma}\label{lem:domination}
Let $G$ be a digraph, let $U$ be non-empty subset of vertices of $G$ and let $(U_1,\dots,U_k)$ be a partition of $U$. For every $i\in [k]$, let $U'_i$ be a non-empty subset of $U_i$ of size at most $\ell$. Then there exists a subset $I\subseteq [k]$ of size at least $k/3^\ell$ such that for every $i\in I$ and every $u\in U'_i$ one of the following condition holds:
\begin{enumerate}
\item
$G$ has a path from $U\setminus\bigcup_{j\in I} U_j$ to $u$ without internal vertex in $U$.
\item
$G$ has no path from $U\setminus U_i$ to $u$. 
\end{enumerate}
\end{lemma}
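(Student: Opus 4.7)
I would first rephrase the two alternatives using an auxiliary digraph $H$ on $U$ whose arcs are the pairs $(v,w)$ such that $G$ contains a path from $v$ to $w$ with no internal vertex in $U$. For each $u\in U$ set $N_H^{[k]}(u)=\{j\in[k]:U_j\cap N_H^-(u)\neq\emptyset\}$. With this notation, condition (1) of the lemma reads $N_H^{[k]}(u)\not\subseteq I$ and condition (2) reads $N_H^{[k]}(u)\subseteq\{i\}$. Crucially, (1) is monotone in $I$ (shrinking $I$ only makes it easier) and (2) is independent of $I$, so any subset of a valid $I$ is still valid; the strategy is therefore to randomly shrink $[k]$ down to $I$.

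The proof I would give is iterative, in the spirit of Lee's dominating-set argument. Set $I_0=[k]$ and run $\ell$ rounds: at round $t$, assign to each surviving $i\in I_{t-1}$ a previously unhandled $u_t(i)\in U'_i$ (possible because $|U'_i|\leq\ell$), partition $I_{t-1}$ uniformly at random into three parts $X^1,X^2,X^3$, and let $I_t$ be the subset of a well-chosen part $X^{s_t}$ consisting of those $i\in X^{s_t}$ for which (1) or (2) holds for $u_t(i)$ with respect to $X^{s_t}$. By monotonicity of (1), the chosen condition for $u_t(i)$ continues to hold in every subsequent $I_{t'}\subseteq X^{s_t}$, so after $\ell$ rounds every $u\in U'_i$ is handled and $I:=I_\ell$ satisfies the lemma. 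A careful expected-value calculation---absorbing the ``singleton predecessor'' case $N_H^{[k]}(u_t(i))\subseteq\{i\}$ via condition (2), and using that otherwise the entire set $N_H^{[k]}(u_t(i))\cup\{i\}$ lies in a single part of the random $3$-partition with probability at most $1/3$---shows that each round retains at least a $1/3$-fraction of the surviving indices in expectation; hence $\mathbb E[|I_\ell|]\geq k/3^\ell$, and a deterministic realisation of the random choices gives the desired $I$.

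The main obstacle I anticipate is the per-round retention bound. A naive union-bound analysis only yields a $2/9$-fraction per round, because the worst case $|N_H^{[k]}(u_t(i))\setminus\{i\}|=1$ makes the ``bad'' event for $i$ happen with conditional probability $1/3$ given the assignment of $i$'s part. The sharper $1/3$-fraction hinges on the simultaneous observation that condition (2) already handles all singletons $N_H^{[k]}(u)\subseteq\{i\}$, and that the remaining constraints can be captured by a dominating-set bound on the auxiliary ``predecessor'' digraph among surviving indices; derandomising the $3$-partition (or, equivalently, invoking Lee's result on this auxiliary digraph) then yields the desired retention. Once this per-round step is established, the induction on $\ell$ is routine and produces the claimed $|I|\geq k/3^\ell$.
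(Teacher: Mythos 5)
Your plan follows essentially the same route as the paper's proof: iterate/induct on $\ell$, handle one element of each surviving $U'_i$ per round, exploit the monotonicity of condition (1) under shrinking $I$ together with the $I$-independence of condition (2), and extract the per-round factor of $3$ from a dominating-set argument. However, the step you explicitly leave open --- the per-round retention of a $1/3$-fraction --- is exactly the base case $\ell=1$ of the lemma, and your own random $3$-partition analysis does not reach it (as you concede, it only yields $2/9$). Saying that the sharper bound follows by ``invoking Lee's result on this auxiliary digraph'' is not yet a proof, because Lee's theorem requires minimum in-degree at least one, which the auxiliary digraph need not have. The paper closes this gap with a concrete manoeuvre: build the digraph $H$ on $[k]$ with an arc $ji$ whenever $j\neq i$ and $G$ has a path from $U_j$ to $u_i$ with no internal vertex in $U$, and let $S$ be the set of sources of $H$. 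If $|S|\geq k/3$, take $I=S$ and condition (2) covers everything. Otherwise, add an artificial arc $ji$ for every $i\in S$ and $j\notin S$; the padded digraph has minimum in-degree at least one, Lee's theorem yields a dominating set $D$ with $|D|\leq 2k/3$, and one takes $I=[k]\setminus D$, of size at least $k/3$. Each $i\in I\setminus S$ retains all its genuine in-neighbours, so it is dominated by some $j\in D\subseteq[k]\setminus I$ through a real arc of $H$, giving condition (1); each $i\in I\cap S$ falls under condition (2). This ``pad the sources, apply Lee, take the complement'' step is the missing idea; without it the claimed $k/3^\ell$ is not established. (Your observation that conditions certified in round $t$ against $X^{s_t}$ must survive all later shrinkings is correct and is exactly the invariant the paper carries through its induction.)

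One further caveat, which your argument shares with the paper's rather than diverging from it: your translation of condition (2) as $N_H^{[k]}(u)\subseteq\{i\}$ is an implication, not an equivalence. A path from $U\setminus U_i$ to $u$ whose last $U$-vertex before $u$ lies in $U_i$ falsifies condition (2) as literally stated while contributing no index $j\neq i$ to $N_H^{[k]}(u)$; so both your argument and the paper's really establish (2) only in the weakened form ``$G$ has no path from $U\setminus U_i$ to $u$ without internal vertex in $U$.'' That weakened form is all the subsequent application (the $2^{k/3^\ell}$ lower bound) uses, but if you keep the lemma's statement verbatim you should either prove (2) as stated or weaken it explicitly.
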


\begin{proof}
We proceed by induction on $\ell$ assuming that $\ell=\max_{i\in[k]}|U'_i|$.

\medskip
If $\ell=1$ then for every $i\in[k]$ there exists $u_i\in U_i$ such that $U'_i=\{u_i\}$. Let $H$ be the digraph on $[k]$ with an arc $ij$ if and only if $i\neq j$ and $G$ has a path from $U_i$ to $u_j$ without internal vertex in $U$. Let $S$ be the set of $i\in [k]$ such that $u_i$ is a source of $H$. If $|S|\geq k/3$ then $I=S$ has the desired properties. So suppose that $|S|< k/3$. Let $H'$ be obtained from $H$ by adding an arc $ij$ for every $j\in S$ and $i\in [k]\setminus S$. In this way, the minimal in-degree of $H'$ is at least one. Consequently, by the result of Lee \cite{L98} mentioned above, $H'$ has a dominating set $D$ of size at most $2k/3$. Thus the size of $I=[k]\setminus D$ is at least $k/3$. Furthermore, by construction: for every $i\in I\setminus S$, there exists $j\in D$ such that $G$ has a path from $U_j$ to $u_i$ without internal vertex in $U$; and for every $i\in I\cap S$, $G$ has no path from $U\setminus U_i$ to $u_i$. Thus $I$ has the desired properties. This proves the case $\ell=1$.

\medskip
Now suppose that $\ell\geq 2$ and for every $i\in [k]$, let $u_i\in U'_i$. According to the case $\ell=1$ there exists a subset $I\subseteq [k]$ of size at least $k/3$ such that: 
\begin{equation}\label{eq:I}
\begin{array}{p{12cm}}
{\em For every $i\in I$, either $G$ has a path from $U\setminus \bigcup_{j\in I} U_j$ to $u_i$ without internal vertex in $U$, or $G$ has no path from $U\setminus U_i$ to $u_i$.} 
\end{array}
\end{equation}
Now, for every $i\in I$ we set $U''_i=U'_i$ if $U'_i=\{u_i\}$ and $U''_i=U'_i-\{u_i\}$ otherwise. Then $\max_{i\in[k]}|U''_i|=\ell-1$. Hence, by induction hypothesis, there exists a subset $I'\subseteq I$ of size at least $|I|/3^{\ell-1}$ such that: 
\begin{equation}\label{eq:I'}
\begin{array}{p{12cm}}
{\em For every $i\in I'$ and every $u\in U''_i$, either $G$ has a path from $U\setminus \bigcup_{j\in I'} U_j$ to $u$ without internal vertex in $U$, or $G$ has no path from $U\setminus U_i$ to $u$.}
\end{array}
\end{equation}
So $|I'|\geq |I|/3^{\ell-1}\geq k/3^\ell$ and we deduce from (\ref{eq:I}) and (\ref{eq:I'}) that $I'$ has the desired properties. 
\end{proof}

We are now in position to prove the following lower bound, using the preceding lemma and the lower bound $2^{\nu^*}$. 

\begin{lemma}
If a digraph $G$ has $k$ disjoint cycles of length at most $\ell$ then   
\[
2^{k/3^\ell}\leq \phi_m(G). 
\]
\end{lemma}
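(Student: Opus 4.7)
The plan is to combine the exponential lower bound $2^{\nu^*}\le\phi_m(G)$ from Lemma~\ref{lem:lowerbound2} with the domination-type Lemma~\ref{lem:domination} to extract from the $k$ short disjoint cycles a \emph{special} sub-packing of size at least $k/3^\ell$. Once this extraction is carried out the conclusion follows at once from $\phi_m(G)\ge 2^{\nu^*}\ge 2^{k/3^\ell}$.

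Let $C_1,\dots,C_k$ be pairwise vertex-disjoint cycles of $G$, each of length at most $\ell$. I would apply Lemma~\ref{lem:domination} with $U=\bigcup_{i=1}^{k}V(C_i)$, with the partition given by $U_i=V(C_i)$, and with the distinguished subsets $U_i'=V(C_i)$; the hypothesis $|U_i'|\le\ell$ is exactly the assumption on cycle length. The lemma returns $I\subseteq[k]$ with $|I|\ge k/3^\ell$ such that for every $i\in I$ and every $v\in V(C_i)$, either (a) $G$ has a path from some $V(C_m)$ with $m\notin I$ to $v$ whose internal vertices all avoid $U$, or (b) $G$ has no path from $U\setminus V(C_i)$ to $v$ at all.

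Next I would show that $\mathcal{P}=\{C_i:i\in I\}$ is a special packing. Fix $i\in I$ and $v\in V(C_i)$ and suppose some principal path with respect to $\mathcal{P}$ goes from a cycle $C_j\in\mathcal{P}$, $j\ne i$, to $v$. Such a path is in particular a path from $U\setminus V(C_i)$ to $v$, so (b) fails and (a) must hold, supplying a path $P$ that ends at $v$, starts at some $w\in V(C_m)$ with $m\notin I$, and has no internal vertex in $U$. Because $m\notin I$, no internal vertex and no arc of $P$ lies in $\mathcal{P}$, so $P$ itself is principal with respect to $\mathcal{P}$. From here I would argue that $P$ can be prolonged backwards, using in-neighbors of $w$ that lie in discarded cycles or outside $U$ (all of which are free to serve as internal vertices of a principal path), until the prolongation either re-enters the cycle $C_i$ or reaches a vertex with no in-neighbors, i.e.\ a source of $G$; the resulting path is a principal path from $C_i$ or from a source to $v$, as required by the definition of a special packing.

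Combining these two steps gives $\nu^*(G)\ge|I|\ge k/3^\ell$, and Lemma~\ref{lem:lowerbound2} then yields
\[
\phi_m(G)\ge 2^{\nu^*(G)}\ge 2^{k/3^\ell}.
\]
The main obstacle I expect is the verification of specialness in the previous paragraph: Lemma~\ref{lem:domination} only guarantees a path originating in some discarded cycle, while the definition of special packing demands a principal path starting inside $C_i$ itself or at an actual source of $G$. Translating between these two conditions requires a careful backward-tracing argument that exploits the disjointness of the original cycles and the fact that vertices of discarded cycles are now free to appear as internal vertices of principal paths. The exponent $1/3^\ell$ ultimately comes from the $3^\ell$ factor in Lemma~\ref{lem:domination}, itself inherited from the $\ell$-fold iteration of Lee's domination bound in its inductive proof.
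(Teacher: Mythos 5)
Your overall strategy (domination lemma to select $|I|\ge k/3^\ell$ cycles, then the $2^{\nu^*}$ bound) is the right one, but there is a genuine gap at the step you yourself flag: the selected sub-packing $\mathcal{P}=\{C_i:i\in I\}$ is in general \emph{not} a special packing of $G$, and the backward-tracing repair cannot work. Lemma~\ref{lem:domination} only delivers a path ending at $v$ that \emph{starts at a vertex $w$ of a discarded cycle} $C_m$, $m\notin I$. Such a $w$ always has an in-neighbor, namely its predecessor on $C_m$, so prolonging the path backwards never reaches a source of $G$; it just winds around $C_m$ (and need not produce a path at all, since vertices repeat). Nor is there any reason for the prolongation to reach $C_i$. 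Concretely, if a discarded cycle $C_m$ sends an arc to $v\in C_i$, some $C_j$ with $j\in I$ also sends a principal path to $v$, and $v$ has no other relevant in-neighbors, then the conclusion of Lemma~\ref{lem:domination} is satisfied with $i\in I$, yet no principal path from $C_i$ or from a source of $G$ reaches $v$ --- so specialness in $G$ fails.

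The missing idea is to change the graph before invoking specialness. Let $W=U\setminus\bigcup_{i\in I}U_i$ be the vertex set of the discarded cycles and let $G'$ be obtained from $G$ by deleting every arc whose head lies in $W$. Every vertex of $W$ becomes a source of $G'$, and the path provided by alternative (a) of Lemma~\ref{lem:domination} (which has no internal vertex in $U$, hence uses no deleted arc) becomes a principal path from a source of $G'$ to $v$; this is exactly what makes $\mathcal{P}$ a special packing \emph{of $G'$}. One then needs the stronger form of Lemma~\ref{lem:lowerbound2}: it yields a monotone network $f'$ on $G'$ with at least $2^{|I|}$ fixed points \emph{and} $f'_v=0$ on every source of $G'$. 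Reinstating the deleted arcs by setting $f_v(x)=\bigwedge_{u\in N^-_G(v)}x_u$ for $v\in W$ and $f_v=f'_v$ elsewhere gives a monotone network with interaction graph $G$ whose fixed points include all those of $f'$ (they satisfy $x_W=0$, and each $v\in W$ has an in-neighbor in $W$, so the conjunction evaluates to $0$). Without this detour through $G'$ and the ``zero on sources'' clause, the argument does not close.
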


\begin{proof}
Suppose that the vertex set of $G$ is $[n]$, and let $C_1,\dots,C_k$ be disjoint cycles of $G$ of length at most $\ell$. For all $i\in[k]$, let $U_i$ be the set of vertices of $C_i$, and let $U=U_1\cup\cdots\cup U_k$. According to Lemma \ref{lem:domination}, there exists a subset $I\subseteq [k]$ of size at least $k/3^{\ell}$ such that  for every $i\in I$ and every $u\in U_i$, either $G$ has a path from $U\setminus \bigcup_{j\in I} U_j$ to $u$ without internal vertex in $U$, or $G$ has no path from $U\setminus U_i$ to $u$. 

\medskip
Let $W=U\setminus \bigcup_{i\in I} U_i$ and let $G'$ be obtained from $G$ by removing every arc $vw$ with $w\in W$. Let us prove that $\calP=\{C_i:i\in I\}$ is a special packing of $G'$. So let $i\in I$, $u\in U_i$ and suppose that $G'$ has a principal path $P$ from a cycle $C_j\neq C_i$ to $u$. Then $G$ has a path from $U-U_i$ to $U_i$, and we deduce that $G$ has a path from $W$ to $u$ without internal vertex in $U$, and since every vertex of $W$ is a source of $G'$, we deduce that $G'$ has a principal path from a source to $u$. Thus $\calP$ is indeed a special packing of $G'$ of size $|I|$. Thus according to Lemma~\ref{lem:lowerbound2} there exists a monotone Boolean network $f'$ with interaction graph $G'$ such that $f'$ has at least $2^{|I|}$ fixed points and $f'=\mathrm{cst}=0$ for every source $v$ of $G'$. 

\medskip 
Let $f:\B^n\to\B^n$ be the Boolean network defined for all $v\in[n]$ by 
\[
f_v(x)=
\begin{cases}
f'_v(x)&\textrm{if }v\not\in W\\
\bigwedge_{u\in N^-_G(v)}x_u&\text{otherwise.}
\end{cases}
\]
Then $f$ is monotone and $G$ is the interaction graph of $f$. Furthermore, if $x$ is a fixed point of $f'$ then $f_v(x)=f'_v(x)=x_v$ for all $v\notin W$ and $x_W=0$. Since if $v\in W$ then $N^-_G(v)\cap W\neq\emptyset$ we deduce that $f_v(x)=0=x_v$. Thus $x$ is a fixed point of $f$. Thus $f$ has at least as many fixed points as $f'$ and this completes the proof. 
\end{proof}

The {\em circumference} $c=c(G)$ of a digraph $G$ is the maximum length of a chordless cycle of $G$. Since every digraph $G$ has always a packing of $\nu$ chordless cycles, we deduce from the preceding lemma that $2^{\nu/3^c}\leq \phi_m(G)$. Furthermore, the set of vertices that belong to a packing of $\nu$ chordless cycles is always a feedback vertex set. Thus we always have $\tau\leq c\nu$ and has a consequence, for every digraph $G$, 
\[
2^{\nu/3^c}\leq \phi_m(G)\leq 2^{c\nu}.
\]
This lower bound can be much more better than the two previous one. As an example, for the transitive tournament on $n$ vertices with loop on each vertex we have $\nu^*=c=1$ and $\nu=n$. 

\medskip
Summarizing the previous results we get the following. 

\begin{theorem}[Lower bounds for monotone Boolean networks]\label{thm:lowerbound}
For every digraph $G$, 
\[
\max\{\nu+1,2^{\nu^*},2^{\nu/3^c}\}\leq\phi_m(G).
\]
\end{theorem}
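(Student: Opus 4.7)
The plan is simply to observe that this is a consolidation theorem: each of the three quantities in the maximum has already been obtained by a separate construction in the preceding lemmas, so the proof amounts to citing them and noting that a lower bound on $\phi_m(G)$ which holds for every digraph $G$ is preserved under taking the maximum of finitely many such bounds.

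More precisely, the inequality $\nu+1\leq \phi_m(G)$ is Lemma~\ref{lem:lowerbound}, whose construction layers the vertex set into the sources-free reach of a maximum packing $C_1,\dots,C_\nu$ and defines $f_v$ as a threshold function whose threshold $\theta_v$ counts in-neighbors lying in layers of index at most the layer of $v$; then the $\nu+1$ characteristic vectors $x^q=\mathbf{1}_{U_0\cup\cdots\cup U_q}$ are all fixed. The inequality $2^{\nu^*}\leq \phi_m(G)$ is Lemma~\ref{lem:lowerbound2}, whose construction uses a special packing of size $\nu^*$ and an \emph{or-of-the-predecessor-in-cycle with and-of-the-others} rule, forcing sources to $0$ so that the principal paths coming from a foreign cycle are neutralized through a principal path from $C_i$ itself or a source; then the $2^{\nu^*}$ ``turn-on-a-subset-of-cycles'' configurations are all fixed. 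Finally, $2^{\nu/3^c}\leq \phi_m(G)$ is the last lemma of the section, obtained by first selecting, via the domination lemma (Lemma~\ref{lem:domination}) applied to the disjoint chordless cycles of $G$, a subset $I$ of them of size at least $\nu/3^c$ with the property that foreign approaches to any cycle of $I$ can be rerouted through sources; deleting the corresponding arcs yields a digraph $G'$ in which these cycles form a special packing, and Lemma~\ref{lem:lowerbound2} applied to $G'$ gives a monotone Boolean network with $2^{\nu/3^c}$ fixed points that extends (via conjunctions on the removed in-neighborhoods) to a monotone network on $G$ with the same fixed points.

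Combining the three inequalities one obtains $\max\{\nu+1, 2^{\nu^*}, 2^{\nu/3^c}\}\leq \phi_m(G)$, which is the statement. Since the three lemmas are already in place, there is no residual obstacle: every construction uses $G$ itself as interaction graph (no subgraph is dropped), so all three numerical bounds genuinely lower-bound $\phi_m(G)$ and can be taken simultaneously.
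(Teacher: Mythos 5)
Your proposal is correct and matches the paper exactly: the theorem is stated there as a summary of Lemma~\ref{lem:lowerbound}, Lemma~\ref{lem:lowerbound2}, and the unnumbered lemma on short disjoint cycles (applied to a packing of $\nu$ chordless cycles, each of length at most $c$), with no further argument needed. Your added observation that each construction realizes $G$ itself as the interaction graph, so that the bounds apply to $\phi_m(G)$ rather than only $\phi'_m(G)$, is the right point to check and is indeed satisfied.
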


\subsection{The undirected case}

We say that a digraph $G$ is {\em symmetric} if, for every distinct vertices $u$ and $v$, either $uv$ and $vu$ are not arcs, or both $uv$ and $vu$ are arcs. Hence, a loop-less symmetric digraph $G$ can be regarded as a simple {\em undirected} graph, simply by regarding every $2$-cycle (cycle of length two) as an undirected edge. Since a loop-less symmetric digraph $G$ has circumference $c=2$ we have $2^{\nu/9}\leq\phi_m(G)$ and this contrasts with the general case where the tight linear lower bound $\nu+1$ holds. Below, we prove something stronger: we prove that $\nu/6\leq\nu^*$, from which we immediately get $2^{\nu/6}\leq\phi_m(G)$. This connection between $\nu$ and $\nu^*$ also contrasts with the general case, where we can have $\nu^*=1$ and $\nu$ arbitrarily large (cf. Proposition~\ref{pro:Kn}). 

\begin{proposition}
For every loop-less symmetric digraph $G$,
\[
\nu/6\leq\nu^*\leq \nu\leq\tau\leq 2\nu.  
\]
\end{proposition}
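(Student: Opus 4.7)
The three rightmost inequalities are of classical type. First, $\nu^* \leq \nu$ is immediate from the definition, since every special packing is a packing. Second, $\nu \leq \tau$ holds because any feedback vertex set must meet each of the $\nu$ vertex-disjoint cycles witnessing $\nu$. Third, for $\tau \leq 2\nu$, I view $G$ as an undirected simple graph $\hat{G}$ on the same vertex set, identifying each pair of opposite arcs with an undirected edge. Every directed cycle of $G$ traverses at least one edge of $\hat{G}$, so for any maximum matching $M$ of $\hat{G}$ the vertex set $V(M)$ is a vertex cover of $\hat{G}$ (no edge of $\hat G$ has both endpoints unmatched) and hence a feedback vertex set of $G$, of size $2|M|$. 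Since each edge of $M$ yields a 2-cycle of $G$ and these 2-cycles are pairwise vertex-disjoint, $|M| \leq \nu$, and therefore $\tau \leq 2|M| \leq 2\nu$.

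The substantial inequality is $\nu/6 \leq \nu^*$. The first step of my plan is to note that $\nu$ coincides with the matching number $\mu$ of $\hat{G}$: any packing of $\nu$ cycles of length at least two in $G$ yields a matching of $\hat{G}$ of size at least $\nu$ by selecting alternate arcs in each cycle, and conversely every matching of $\hat{G}$ is a packing of $G$ made of 2-cycles. Fixing a maximum matching $M$ of $\hat{G}$, viewed as a packing of $|M|=\nu$ 2-cycles of $G$, the task reduces to extracting a sub-family $M' \subseteq M$ of size at least $|M|/6$ whose 2-cycles form a special packing.

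To achieve this, I would build a conflict graph $H$ on the vertex set $M$ in which two matching edges $e=\{u,v\}$ and $e'=\{u',v'\}$ are adjacent precisely when keeping them both simultaneously can violate the special-packing condition; concretely, when some vertex $w \in e$ admits a principal path from $e'$ while $e$ itself admits no principal return to $w$ once all other matched vertices are deleted. An independent set of $H$ then corresponds to a special sub-packing, so the elementary lower bound $\alpha(H) \geq |M|/(\Delta(H)+1)$ reduces the claim to showing $\Delta(H) \leq 5$. The main obstacle is precisely this degree bound. The plan is to exploit the maximality of $M$: a matching edge $e$ in conflict with six or more other matching edges should provide, via the local structure of $\hat{G}$ around $e$, enough alternating structure to exhibit an $M$-augmenting path, contradicting maximality. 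A careful case analysis, split according to which endpoint of $e$ is problematic and which type of principal return path is missing, should then deliver $\Delta(H)\leq 5$ and hence the bound $\nu/6 \leq \nu^*$.
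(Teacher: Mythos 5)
Your handling of $\nu^*\leq\nu\leq\tau\leq 2\nu$ is correct (and for $\tau\leq 2\nu$ essentially matches the paper, which invokes the circumference bound $\tau\leq c\nu$ with $c=2$). The gap is in the main inequality $\nu/6\leq\nu^*$, where your conflict-graph plan does not go through. First, being a special packing is not a pairwise property of the chosen $2$-cycles: a path is \emph{principal} relative to the whole selected sub-packing $M'$, so vertices of $M\setminus M'$ are perfectly legitimate internal vertices of both the offending paths and the required return paths. Consequently no fixed graph $H$ on $M$ can have the property that its independent sets are the special sub-packings, and your concrete pairwise test (``once all other matched vertices are deleted'') systematically ignores the return paths that matter most. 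Second, the degree bound $\Delta(H)\leq 5$ is false and cannot be rescued by augmenting paths: take $G=K_{2n}$ (as a loop-less symmetric digraph) with a perfect matching $M$. Under your test every pair of matching edges conflicts (deleting all other matched vertices leaves no vertex to route a return path through), so $H$ is complete, $\alpha(H)=1$, and $M$ admits no augmenting path since it is perfect; yet $\nu^*(K_{2n})\geq n-1$, because any proper sub-matching $M'$ leaves a free vertex $w$ and the return path $v,w,u$ is principal with respect to $M'$.

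This example points to the idea you are missing, which is the heart of the paper's proof: the matching edges you \emph{discard} are precisely what supply the unmatched vertices through which length-two return paths $v,w,v$ can be routed. The paper isolates a sufficient condition $(*)$ --- every endpoint of a kept edge that sees another kept edge must also see a vertex outside the final matching --- and then arranges it by \emph{domination} rather than independence: starting from a maximum matching $M_1$, it sets aside the isolated edges $I_1$ (no conflicts at all), takes a dominating set $D_1$ of size at most $(\nu-|I_1|)/2$ in an undirected auxiliary graph on the remaining edges, then a dominating set $D_2$ of relative size at most $2/3$ in a directed auxiliary graph (via Lee's theorem on domination in digraphs of minimum in-degree one, the same tool used for the $2^{\nu/3^c}$ bound), and keeps $M_3=M_1\setminus(I_1\cup D_1\cup D_2)$ together with $I_1$. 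Each kept edge is then adjacent, at whichever endpoint needs it, to a vertex of the discarded sets, which certifies $(*)$ and yields the factor $\frac{1}{2}\cdot\frac{1}{3}=\frac{1}{6}$. You would need to replace your independence argument by a construction of this dominating type for the proof to work.
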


\begin{proof}
The inequality $\tau\leq2\nu$ comes from the fact that the circumference of $G$ is $c=2$, so the only thing to prove is $\nu/6\leq\nu^*$. Below, we view $G$ as a simple undirected graph, that is, we view Êa cycle of length two between two vertices $u$ and $v$ has an undirected edge $uv$. If $uv$ is an edge, we say that $u$ and $v$ are {\em adjacent}, and we say that a vertex $w$ {\em covers} $uv$ if $w=u$ or $w=v$. A {\em matching} is a set of vertex-disjoint edges. Clearly, $\nu$ is the maximum size of a matching in $G$. We say that a vertex belongs to a matching $M$ if it covers one of the edge of $M$.

\medskip
Suppose that $G$ has a matching $M$ satisfying the following property:
\[
\tag{$*$}
\begin{array}{p{12cm}}
{\em If $G$ has an edge $uv$ where $u$ and $v$ cover different edges of the matching, then $v$ is adjacent to some vertex that does not belong to the matching.}
\end{array}
\]
We claim that $M$ is then a {\em special matching} (that is, the cycles of length two corresponding to the edges of $M$ form a special packing). Indeed, suppose that $M=\{u_1v_1,\dots,u_kv_k\}$ and suppose that $G$ has a principal path $P$ from $u\in\{u_i,v_i\}$ to $v\in\{u_j,v_j\}$ for some distinct $i,j\in[k]$. If $P$ is of length at least two, then the vertex $w$ adjacent to preceding $v$ in $P$ is not in the matching, and $P'=vwv$ is a principal path from $\{u_j,v_j\}$ to $v$. If $P$ is of length one, then $P$ is reduced to the arc $uv$ and we deduce from the property ($*$) that $v$ is adjacent to a vertex $w$ that is not in the matching, and thus, as previously, $P'=vwv$ is a principal path from $\{u_j,v_j\}$ to $v$. Thus $M$ is indeed a special matching. Therefore, to prove the proposition it is sufficient to prove that $G$ has a matching of size at least $\nu/6$ with the property ($*$).  

\medskip
Let $M_1=\{u_1v_1,\dots,u_\nu v_\nu\}$ be a matching of size $\nu$, and let $G_1$ be the simple undirected graph whose vertices are the edges of $M_1$ and with an edge between $u_iv_i$ and $u_jv_j$ if $G$ has an edge $uv$ with $u\in \{u_i,v_i\}$ and $v\in \{u_j,v_j\}$. Let $I_1$ be the set of isolated vertices of $G_1$. Hence, in $G$, we have the following property:
\begin{equation}\label{eq:undirected1}
\begin{array}{c}
\text{\em For every $u_iv_i\in I_1$, $u_i$ and $v_i$ are adjacent to no vertex in $M_1\setminus \{u_iv_i\}$}. 
\end{array}
\end{equation}
Now, since $G_1\setminus I_1$  has no isolated vertex, it contains a dominating set $D_1$ of size at most $(\nu-|I_1|)/2$ (this is a basic result about domination in graphs, see for instance \cite[Theorem 2.1]{HHS98}). Thus the complement $M_2=M_1\setminus (I_1\cup D_1)$ has size at least $(\nu-|I_1|)/2$. Hence, by construction, in $G$, for every edge $u_iv_i\in M_2$, $u_i$ or $v_i$ (or both) is adjacent to some vertex that belongs to the matching $D_1$. So without loss of generality, we can suppose that $G$ has the following property: 
\begin{equation}\label{eq:undirected2}
\begin{array}{c}
\text{\em For every $u_iv_i\in M_2$, $v_i$ is adjacent to some vertex in $D_1$}. 
\end{array}
\end{equation}
Let $G_2$ be the {\em digraph} whose vertices are the edges of the matching $M_2$ and with an arc {\em directed} from $u_iv_i$ to $u_jv_j$ if $G$ has an edge $uv$ with $u\in \{u_i,v_i\}$ and $v=u_j$. Let $I_2$ be the set vertices of in-degree zero in $G_2$. Hence, in $G$, we have the following property:
\begin{equation}\label{eq:undirected3}
\begin{array}{c}
\text{\em For every $u_iv_i\in I_2$, $u_i$ is adjacent to no vertex in $M_2\setminus\{u_iv_i\}$}. 
\end{array}
\end{equation}
Let $\tilde G_2$ be obtained from $G_2$ by adding an arc from every $u_iv_i\in M_2$ to every $u_jv_j\in I_2$, with $i\neq j$. Since $\tilde G_2$ has minimal in-degree at least one, by the result of Lee \cite{L98} mentioned above, it has a dominating set $D_2$ of size at most $3|M_2|/2$. Thus the complement $M_3=M_2\setminus D_2$ has size at least $|M_2|/3$. Hence, by construction, in $G$ we have the following property: 
\begin{equation}\label{eq:undirected4}
\begin{array}{c}
\text{\em For every $u_iv_i\in M_3\setminus I_2$, $u_i$ is adjacent to some vertex in $D_2$}. 
\end{array}
\end{equation}

\medskip
We have  
\[
|M_3|\geq \frac{|M_2|}{3}\geq \frac{\nu-|I_1|}{6}\geq \frac{\nu}{6}-|I_1|, 
\]
thus it is sufficient to prove that $I_1\cup M_3$ has the property ($*$). Suppose that $G$ has an edge $uv$ where $u$ and $v$ cover different edges of $I_1\cup M_3$, say $u_iv_i$ and $u_jv_j$ respectively. Then, according to (\ref{eq:undirected1}), $u_iv_i$ and $u_jv_j$ are not in $I_1$, and thus they belong to $M_3$. Furthermore, if $v=v_j$ then since $M_3\subseteq M_2$ we deduce from (\ref{eq:undirected2}) that $v$ is adjacent to a vertex not in $I_1\cup M_3$. If $v=u_j$ then we deduce from (\ref{eq:undirected3}) that $u_jv_j\not\in I_2$. Hence, according to  (\ref{eq:undirected4}), we deduce that $v=u_j$ is adjacent to a vertex not in $I_1\cup M_3$. Thus, in every case, $v$ is adjacent to a vertex not in $I_1\cup M_3$. We prove with similar arguments that $u$ is adjacent to a vertex not in $I_1\cup M_3$. Thus $I_1\cup M_3$ has the property ($*$) and this completes the proof. 
\end{proof}

\section{Upper bound for signed digraphs}\label{sec:signedgraph}

The signed interaction graph of a Boolean network has been introduced in Section~\ref{intro}, with some associated parameters. The next paragraph recalls these concepts for convenience, and introduces some additional ones.

\medskip
A {\em signed digraph} is a couple $(G,\sigma)$ where $G$ is a digraph and $\sigma$ is an arc-labeling function taking its values in $\{-1,0,1\}$. We denote by $(G,+)$ (resp. $(G,-)$) the signed digraph $(G,\sigma)$ where $\sigma$ gives a positive (resp. negative) sign to each arc. An {\em undirected cycle} is a sequence of distinct vertices $v_0,\dots,v_\ell$ such that $G$ has an arc from $v_{k}$ to $v_{k+1}$ or from $v_{k+1}$ to $v_{k}$ for all $0\leq k\leq\ell$ (where $k+1$ is computed modulo $\ell+1$). The sign of a (directed or undirected) cycle is the product of the signs of its arcs. We say that $(G,\sigma)$ is {\em balanced} if all its directed and undirected cycles are positive. The notion of balance is central in the study of signed digraphs \cite{H59,Z82,Z12}. The {\em signed interaction graph} of a Boolean network $f:\B^n\to\B^n$ is the signed digraph $(G,\sigma)$ where $G$ is the interaction graph of $f$ and where $\sigma$ is the arc-labeling function defined for all arc $uv$ of $G$ by 
\[
\sigma(uv)=
\left\{
\begin{array}{rl}
 1 &\text{if $f_v(x)\leq f_v(x+e_u)$ for all $x\in\B^n$ with $x_u=0$},\\
-1 &\text{if $f_v(x)\geq f_v(x+e_u)$ for all $x\in\B^n$ with $x_u=0$},\\
 0 &\text{otherwise.}
\end{array}
\right.
\] 
Given a digraph $(G,\sigma)$, we denote by $\phi(G,\sigma)$ the maximum number of fixed points in a Boolean network whose signed interaction graph is (isomorphic to) $(G,\sigma)$. Furthermore, we denote by $\tau^+=\tau^+(G,\sigma)$ the minimum size of a set of vertices intersecting every {\em non-negative} cycle, and we denote by $\nu^+$ the maximum number of vertex-disjoint {\em non-negative} cycles of $(G,\sigma)$. We have, obviously, $\nu^+\leq \tau^+\leq\tau$ and $\nu^+\leq\nu\leq \tau$. One of the authors \cite{A08} proved the following generalization of the feedback bound $2^\tau$, that we call {\em positive feedback bound} in the following: for every signed digraph $(G,\sigma)$, 
\[
\phi(G,\sigma)\leq 2^{\tau^+}.
\] 

\medskip
In this section, we discuss previous results on monotone networks in the context of signed digraphs, and we establish an upper bound, competitive with the positive feedback bound $2^{\tau^+}$, which is also based on the cycle structure. Let us start with a basic observation. Given a Boolean network $f$ with signed interaction graph $(G,\sigma)$, and a vertex $v$ in this graph, it is easy to see that $f_v$ is monotone if and only if $\sigma(uv)=1$ for every in-neighbor $u$ of $v$. As a consequence, $f$ is monotone if and only if $\sigma=\mathrm{cst}=1$, and we deduce that 
\begin{equation}\label{eq:1}
\phi(G,+)=\phi_m(G). 
\end{equation}

\medskip
To go further we need an usual operation on signed digraphs, called switch $\cite{Z82}$. Given a signed digraph $(G,\sigma)$ and a subset $I$ of vertices in $G$, the {\em $I$-switch} of $(G,\sigma)$ is the signed digraph $(G,\sigma^I)$ where $\sigma^I$ is defined for every arc $uv$ of $G$ by 
\[
\sigma^I(uv)=
\left\{
\begin{array}{rl}
\sigma(uv)&\text{if }u,v\in I\text{ or }u,v\not\in I,\\
-\sigma(uv)&\text{otherwise}. 
\end{array}
\right.
\]
Observe that every cycle $C$ of $G$ has the same sign in $(G,\sigma)$ and $(G,\sigma^I)$. As a consequence $\nu^+$ and $\tau^+$ are invariant under the switch operation. Below, we prove that $\phi$ is also invariant under the switch operation. 

\begin{lemma}\label{lem:switch}
Let $(G,\sigma)$ be a signed digraph and let $I$ be a subset of its vertices. Then
\[
\phi(G,\sigma)=\phi(G,\sigma^I). 
\]
\end{lemma}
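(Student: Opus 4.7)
The plan is to exhibit a direct bijection at the level of Boolean networks themselves: given any $f$ with signed interaction graph $(G,\sigma)$, I will build an $f^I$ with signed interaction graph $(G,\sigma^I)$ and with the same number of fixed points, so that $\phi(G,\sigma)\le\phi(G,\sigma^I)$. Since $(\sigma^I)^I=\sigma$, the symmetric argument gives equality.

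Concretely, I would introduce the involution $\psi:\B^n\to\B^n$ defined by $\psi(x)_v=1-x_v$ if $v\in I$ and $\psi(x)_v=x_v$ otherwise, and set $f^I=\psi\circ f\circ\psi$. Then $x$ is a fixed point of $f^I$ if and only if $\psi(x)$ is a fixed point of $f$, so $\psi$ is a bijection $\fixe(f^I)\to\fixe(f)$; this handles the counting step.

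The main content is to check that the signed interaction graph of $f^I$ is exactly $(G,\sigma^I)$. Component-wise, $f^I_v(x)=f_v(\psi(x))$ if $v\notin I$ and $f^I_v(x)=1-f_v(\psi(x))$ if $v\in I$. Since $\psi$ is a coordinate-wise involution, $f^I_v$ depends on $x_u$ exactly when $f_v$ depends on $x_u$, so the unsigned graph is $G$. For the sign of an arc $uv$, I would enumerate the four cases according to whether $u,v$ lie in $I$: varying $x_u$ in $f^I_v$ amounts to composing a variation of $(f_v)$ with respect to $x_u$ with (possibly) a flip of the input at coordinate $u$ (when $u\in I$) and (possibly) a flip of the output (when $v\in I$). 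Each flip reverses $\le$ and $\ge$ in the defining inequalities of $\sigma$; hence the new sign is $\sigma(uv)$ when $u,v$ are both in $I$ or both outside $I$, and $-\sigma(uv)$ otherwise, which is precisely $\sigma^I(uv)$.

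No step here looks genuinely hard; the only thing to be careful about is the case analysis for the sign of an arc, since I must handle $\sigma(uv)=0$ uniformly (it corresponds to the ``otherwise'' clause, and is clearly preserved under both types of flip). Once the signed interaction graph is identified, the bijection $\psi$ between fixed-point sets gives $\phi(G,\sigma)=|\fixe(f)|=|\fixe(f^I)|\le\phi(G,\sigma^I)$ for an optimal $f$, and applying the same construction to an optimal network realising $(G,\sigma^I)$ with the same set $I$ yields the reverse inequality, completing the proof.
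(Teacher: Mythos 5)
Your proposal is correct and takes essentially the same route as the paper: the paper defines $f^I(x)=f(x+e_I)+e_I$, which is exactly your conjugation $\psi\circ f\circ\psi$ by the coordinate flip on $I$, checks that fixed points correspond under this flip and that the signed interaction graph becomes $(G,\sigma^I)$, and then closes the argument with $(\sigma^I)^I=\sigma$ just as you do.
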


\begin{proof}
Let $f:\B^n\to\B^n$ be a Boolean network with $(G,\sigma)$ as signed interaction graph and with $\phi(G,\sigma)$ fixed points. Let $f^I:\B^n\to\B^n$ be defined by
\[
f^I(x)=f(x+e_I)+e_I. 
\]
It is easy to check that $f(x)=x$ if and only if $f^I(x+e_I)=x+e_I$. Thus $f$ and $f^I$ have the same number of fixed points. It is also easy to check that $(G,\sigma^I)$ is the signed interaction graph of $f^I$. Hence, $\phi(G,\sigma)\leq \phi(G,\sigma^I)$ so that $\phi(G,\sigma^I)\leq \phi(G,(\sigma^I)^I)$. Since $(\sigma^I)^I=\sigma$ we have $\phi(G,\sigma^I)\leq \phi(G,\sigma)$ and this completes the proof. 
\end{proof}

Given a signed digraph $(G,\sigma)$, the {\em frustration index} (sometime called {\em index of imbalance}), denoted $\lambda=\lambda(G,\sigma)$, is the minimum number of non-positive arcs in a switch of $(G,\sigma)$. A basic result in signed graph theory is that $\lambda$ can be equivalently defined as the minimum number of arcs whose deletion in $(G,\sigma)$ yield a balanced signed digraph \cite{H59}. Hence, the following assertions are equivalent: $\lambda=0$; $(G,\sigma)$ is balanced; there exists a subset of vertices $I$ such that $(G,\sigma^I)=(G,+)$. We then deduce from (\ref{eq:1}) and Lemma~\ref{lem:switch} that, 
\[
\text{$(G,\sigma)$ is balanced}
~\Rightarrow~
\phi(G,\sigma)=\phi_m(G).
\]
Hence, all the results on the maximum number of fixed points obtained in the monotone case are more generally valid for balanced digraphs.

\medskip
We now introduce an upper bound on $\phi(G,\sigma)$ that is competitive with $2^{\tau^+}$. A {\em monotone feedback vertex set} in signed digraph $(G,\sigma)$ is a feedback vertex set $I$ of $G$ such that $\sigma(uv)=1$ for every arc $uv$ of $G$ with $v\not\in I$. We denote by $\tau_m=\tau_m(G,\sigma)$ the minimum size of a monotone feedback vertex set of $(G,\sigma)$. We have, obviously, $\tau\leq\tau_m$. As an immediate application of two lemmas introduced to bound $\phi_m(G)$ we have the following property. 

\begin{lemma}\label{lem:taum}
Let $f$ be a Boolean network with signed interaction graph $(G,\sigma)$. The number of fixed points in $f$ is at most the sum of the $\nu^++1$ largest binomial coefficients $\tau_m\choose k$.
\end{lemma}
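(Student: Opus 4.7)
The plan is to follow the recipe used to prove Theorem~\ref{thm:erdos}, but replacing the feedback vertex set by a monotone one and replacing $\nu$ by $\nu^+$. First I would fix a monotone feedback vertex set $I$ of $(G,\sigma)$ of minimum size $\tau_m$. By the very definition of a monotone feedback vertex set and of the signed interaction graph, every in-coming arc of a vertex $v\notin I$ has sign $+1$, which means that $f_v(x)\leq f_v(x+e_u)$ for every in-neighbour $u$ of $v$ and every $x\in\B^n$ with $x_u=0$; consequently $f_v$ is monotone. This is precisely the hypothesis of Lemma~\ref{lem:iso} (whose proof uses only the monotonicity of the components $f_v$ with $v\notin I$, not the monotonicity of $f$ as a whole), and applying it yields an order-isomorphism between $\fixe(f)$ and $L=\{x_I:x\in\fixe(f)\}\subseteq\B^{\tau_m}$.

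Next I would bound the longest chain in $L$. Suppose, for contradiction, that $L$ contained a chain of size $\nu^++2$. Via the isomorphism above, $\fixe(f)$ would contain a chain of the same size, and Lemma~\ref{lem:totalorder} would then produce $\nu^++1$ vertex-disjoint non-negative cycles in the signed interaction graph of $f$, contradicting the definition of $\nu^+$. Hence $L$ is a subset of $\B^{\tau_m}$ without chain of size $\nu^++2$, and Erd\H{o}s' theorem (the same theorem invoked in the proof of Theorem~\ref{thm:erdos}, applied with $n=\tau_m$ and $\ell=\nu^++1$) bounds $|L|$ by the sum of the $\nu^++1$ largest binomial coefficients $\binom{\tau_m}{k}$. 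Since $|\fixe(f)|=|L|$, this is the desired inequality.

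There is no real obstacle here once the monotonicity observation is in place; everything else is a straight reuse of the machinery already developed for $\phi_m(G)$. The one point worth emphasizing is that, unlike the bound in Theorem~\ref{thm:erdos}, we cannot sharpen the estimate by subtracting the top and bottom elements of $L$, because for a general (non-monotone) network $f$ the set $\fixe(f)$ need not be a lattice, so Knaster--Tarski is unavailable and one loses the additive ``$+2$'' together with the shift from $\nu^++1$ to $\nu^+-1$ in the index range.
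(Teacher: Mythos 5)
Your proof is correct and follows essentially the same route as the paper's: fix a minimum monotone feedback vertex set $I$, observe that all components $f_v$ with $v\notin I$ are monotone, apply Lemma~\ref{lem:iso} to get the isomorphic copy $L\subseteq\B^{\tau_m}$, rule out chains of size $\nu^++2$ via Lemma~\ref{lem:totalorder}, and finish with Erd\H{o}s' theorem. Your closing remark about why the ``$+2$'' refinement of Theorem~\ref{thm:erdos} is unavailable here (no Knaster--Tarski for non-monotone $f$) is a correct and worthwhile observation, though not needed for the statement.
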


\begin{proof}
Let $I$ be a monotone feedback vertex set of $(G,\sigma)$ of size $\tau_m$. If $v\not\in I$ then $\sigma(uv)=1$ for all $u\in N^-_G(v)$, so that $f_v$ is monotone. Hence, by Lemma~\ref{lem:iso}, $\fixe(f)$ is isomorphic to a subset $X\subseteq \B^I$, and by Lemma~\ref{lem:totalorder}, $X$ has no chain of size $\nu^++2$. Hence, by the Erd\H{o}s' theorem already used in the proof of Theorem~\ref{thm:erdos}, we deduce that $|X|$ is at most the sum of the $\nu^++1$ largest binomial coefficients ${|I|\choose k}$.
\end{proof}

Contrary to $\nu^+$, $\tau^+$, $\lambda$ and $\phi$, the parameter $\tau_m$ is {\em not} invariant under the switch operation. Consider for instance the loop-less symmetric digraph $K_{n,n}$ corresponding to the complete bipartite graph with both parts of size $n$. We have $\tau_m(K_{n,n},-)=2n$ and  $\tau_m(K_{n,n},+)=\tau(K_{n,n})=n$. Furthermore, $(K_{n,n},+)$ is the switch of $(K_{n,n},-)$ by one of the two parts. Thus, given a signed digraph $(G,\sigma)$ with vertex set $V$, it makes sense consider the parameter $\tau^*_m$ defined as the minimum size of a monotone feedback vertex set in a switch of $(G,\sigma)$, that is, 
\[
\tau^*_m=\tau^*_m(G,\sigma)=\min_{I\subseteq V}\tau_m(G,\sigma^I).
\]
Graph parameters based on feedback vertex sets are thus ordered as follows:
\[
\tau^+\leq\tau\leq\tau^*_m\leq\tau_m,
\]
and $\tau^+=\tau_m$ for balanced signed digraphs.

\medskip
We have now the following upper bound, that improves the one of Lemma~\ref{lem:taum}. 

\begin{theorem}
Let $f$ be Boolean network with signed interaction graph $(G,\sigma)$. The number of fixed points in $f$ is at most the sum of the $\nu^++1$ largest binomial coefficients $\tau^*_m\choose k$.
\end{theorem}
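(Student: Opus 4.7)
The plan is to combine Lemma~\ref{lem:taum} with the switch-invariance of $\phi$ (Lemma~\ref{lem:switch}) and of $\nu^+$ (already noted in the paragraph preceding Lemma~\ref{lem:switch}). The idea is that Lemma~\ref{lem:taum} already gives a bound in terms of $\tau_m$, and since neither $\phi(G,\sigma)$ nor $\nu^+(G,\sigma)$ changes when we switch, we are free to apply that lemma to the best switch, which by definition is the one achieving $\tau_m^*$.

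More concretely, I would proceed as follows. First, pick a subset $I \subseteq V(G)$ that realizes the minimum in the definition of $\tau_m^*$, so that $\tau_m(G,\sigma^I)=\tau_m^*(G,\sigma)$. Next, apply Lemma~\ref{lem:switch} to obtain a Boolean network $f^I$ whose signed interaction graph is $(G,\sigma^I)$ and whose number of fixed points equals that of $f$. Then apply Lemma~\ref{lem:taum} to $f^I$: this bounds the number of fixed points of $f^I$ by the sum of the $\nu^+(G,\sigma^I)+1$ largest binomial coefficients $\binom{\tau_m(G,\sigma^I)}{k}$. Finally, observe that every cycle of $G$ has the same sign in $(G,\sigma)$ and in $(G,\sigma^I)$ (this is the switch-invariance of cycle signs recalled just before Lemma~\ref{lem:switch}), so the maximum number of vertex-disjoint non-negative cycles is the same in both, i.e.\ $\nu^+(G,\sigma^I)=\nu^+(G,\sigma)=\nu^+$. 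Substituting gives exactly the claimed bound.

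There is essentially no obstacle here: the work has already been done in Lemmas~\ref{lem:switch} and~\ref{lem:taum}, and the theorem is a clean consequence once one notices that $\tau_m$ is the only non-switch-invariant quantity involved, which justifies minimizing it over all switches and defining $\tau_m^*$. The only minor subtlety worth flagging in the write-up is to explicitly remind the reader of the two invariances used (of $\phi$ and of $\nu^+$), so that the passage from $\tau_m$ to $\tau_m^*$ is transparent; no new combinatorial argument is required.
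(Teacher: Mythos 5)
Your proposal is correct and follows essentially the same route as the paper: choose $I$ realizing $\tau^*_m$, pass to the switched network via Lemma~\ref{lem:switch}, apply Lemma~\ref{lem:taum} there, and conclude using the switch-invariance of $\nu^+$. No discrepancies to report.
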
 

\begin{proof}
Let $I$ be such that $\tau^*_m=\tau^*_m(G,\sigma)=\tau_m(G,\sigma^I)$. According to Lemma~\ref{lem:taum}, $\phi(G,\sigma^I)$ is at most the sum of the $\nu^+(G,\sigma^I)+1$ largest binomial coefficients $\tau^*_m\choose k$. Since $\nu^+(G,\sigma^I)=\nu^+(G,\sigma)$ and since, by Lemma~\ref{lem:switch}, $|\fixe(f)|\leq\phi(G,\sigma)=\phi(G,\sigma^I)$, this proves the theorem. 
\end{proof}

An equivalent statement is the one given in the introduction:
\begin{equation}\label{eq:signedcase}
\phi(G,\sigma)\leq \sum_{k=\lfloor\frac{\tau^*_m-\nu^+}{2}\rfloor}^{\lfloor\frac{\tau^*_m+\nu^+}{2}\rfloor}{\tau^*_m \choose k}.
\end{equation}
This clearly improves the bound $2^{\tau^+}$ when $\tau^+=\tau^*_m$ and when the gap between $\nu^+$ and $\tau^*_m$ is large. Consider for instance the loop-less symmetric digraph $K_n$ corresponding to the complete graph on $n\geq 2$ vertices. We have $\tau^+(K_n,-)=\tau^*_m(K_n,-)=n-1<\tau_m(K_n,-)=n$ and $\nu^+(K_n,-)=\left\lfloor\frac{n}{2}\right\rfloor$. Thus, by (\ref{eq:signedcase}), $\phi(K_n,-)$ is at most the $\lfloor\frac{n}{2}\rfloor+1$ largest binomial coefficient ${n-1\choose k}$. This is much better than the positive feedback bound $2^{\tau+}$ which is, in this case, $2^{n-1}$. However, for $(K_n,-)$, the upper bound (\ref{eq:signedcase}) is still far from the exact value $\phi(K_n,-)={n\choose \lfloor\frac{n}{2}\rfloor}$ established in $\cite{GRR15}$ with a dedicated method. Actually, for $(K_n,-)$, the upper bound (\ref{eq:signedcase}) is tight only for $n=2,3$ (while the bound $2^{\tau^+}$ is tight only for $n=2$).  

\medskip
Let us finally mention that the parameter $\tau^*_m$ is connected to the transversal number $\tau$  and  frustration index $\lambda$ in the following way. Let $(G,\sigma)$ be a signed digraph and let $I$ be such that $(G,\sigma^I)$ has $\lambda$ non-positive arcs. Let $J$ be the set of vertices $v$ such that $G$ has an arc $uv$ with $\sigma^I(uv)\neq 1$. If $K$ is a feedback vertex set of $G$ then $J\cup K$ is a monotone feedback vertex set of $(G,\sigma^I)$ of size $|J\cup I|\leq|J|+|I|\leq \lambda +\tau$. Thus, for every signed digraph $(G,\sigma)$, 
\[
\tau\leq \tau^*_m\leq\tau+\lambda.
\]   

\section{Concluding remarks}\label{sec:conclusion}

Let us first remark that the parameters $\nu^*$, $\nu$ and $\tau$ are invariant under the subdivision of arcs, and that  the parameters $\nu^+$, $\tau^+$ and $\tau_m$ and $\tau^*_m$ are invariant under the subdivision of arcs preserving signs (that is, when signed arcs are replaced by paths with the same signs). This is often relevant in practice. For instance, in biology, a biological pathways  involving dozens of biological entities is often summarized by a single arc. Hence, in this kind of context, parameters invariant under subdivisions are particularly relevant, in contrast with the number of vertices, the circumference or the girth.  

\medskip
Let us now present a few possible future works. To make progresses on our understanding of $\phi_m(G)$, it could be interesting to determine the maximal size of a subset of $L\subseteq \B^\tau$ satisfying the four constraints of Theorem~\ref{thm:upperbound} (by Erd\H{o}s' theorem, we known the maximal size of $L$ subjects to the first two only). To initiate this study, it could be interesting to first focus on special $k$-patterns, considering the following question: {\em Given positive integers $k$ and $n$, what is the maximum size of a subset of $\B^n$ without special $k$-pattern.}

\medskip
It could be interesting to go further in the study of $\phi(G,\sigma)$, for instance by establishing lower bounds in the spirit of Theorem~\ref{thm:lowerbound}. We may also think about an improvement of  (\ref{eq:signedcase}) that generalizes (\ref{eq:upperbound}) to the signed case.

\medskip
Besides, we known that directed cycles have the Erd\H{o}s-P\'osa property: there exists a (smallest) function $h:\mathbb{N}\to\mathbb{N}$ such that $\tau\leq h(\nu)$ for every digraph \cite{RRST95}. This leads us to ask if non-negative cycles have also this property, that is: {\em Is there exists a function $h^+:\mathbb{N}\to\mathbb{N}$ such that $\tau^+\leq h^+(\nu^+)$ for every signed digraph?} It is easy to show that this is equivalent to ask if {\em even} directed cycles have the Erd\H{o}s-P\'osa property (this is true in the undirected case~\cite{T88}). 
 
\medskip
Let $\phi_m(k)$ be the maximal number of $\phi_m(G)$ for a digraph $G$ with $\nu(G)\leq k$. We known that $\phi_m(k)$ is finite for every $k$ since, for every digraph $G$, we have $\phi_m(G)\leq 2^\tau\leq 2^{h(\nu)}$, so that $\phi_m(k)\leq 2^{h(k)}$. However, the established upper-bound on $h(k)$ is very large (power tower type) and the proof is quite involved. It could be thus interesting to try to prove the finiteness of $\phi_m(k)$ directly, without use the fact that directed cycles have the Erd\H{o}s-P\'osa property. We could hope to find, in this way, the right order of magnitude of $\phi_m(k)$. Let us mention that, according to (\ref{eq:nu1}), $\phi_m(1)=2$, and according to the proposition below, $\phi_m(2)=4$. In contrast, the unique exact value of $h$ is $h(1)=3$ and is hard to obtain \cite{M93}. This shows that bounding $\phi_m$ by $\nu$ could be much more simple than bounding $\tau$ by $\nu$.

\begin{proposition}
For every digraph $G$,
\[
\nu\leq 2~\Rightarrow~\phi_m(G)\leq 4.
\]
\end{proposition}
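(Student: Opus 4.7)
The plan is to prove the contrapositive: if $f$ is a monotone Boolean network with $|\fixe(f)| \geq 5$, then its interaction graph $G$ contains three vertex-disjoint cycles. By the Knaster-Tarski theorem, $\fixe(f)$ is a lattice, with minimum $\hat 0$ and maximum $\hat 1$. If $\fixe(f)$ contains a chain of size four, Lemma~\ref{lem:totalorder} directly yields $\nu(G) \geq 3$, so I may assume every chain of fixed points has size at most three. This forces $\fixe(f)$ to consist of $\hat 0$, $\hat 1$, and a middle antichain $M$; since $|\fixe(f)| \geq 5$, $|M| \geq 3$, and I pick three distinct elements $x,y,z \in M$.

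Let $I = \{v : \hat 0_v < \hat 1_v\}$. Every fixed point agrees with $\hat 0$ (and $\hat 1$) outside $I$, while on $I$ the three elements $x,y,z$ remain pairwise incomparable. I extract three pairwise disjoint subsets of $I$, each inducing a subgraph of $G$ with minimum in-degree at least one:
\[
A = \{v \in I : x_v = 1,\, y_v = 0\}, \quad B = \{v \in I : x_v = 0,\, z_v = 1\}, \quad C = \{v \in I : y_v = 1,\, z_v = 0\}.
\]
Non-emptiness of $A,B,C$ follows from $x \not\leq y$, $z \not\leq x$, and $y \not\leq z$ respectively, and pairwise disjointness is immediate, since any two of these sets impose contradictory values on one of $x, y, z$.

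The core step is to show that each of $G[A]$, $G[B]$, $G[C]$ has minimum in-degree at least one, and hence contains a cycle. For $A$: fix $v \in A$, so that $f_v(x) = x_v = 1$ and $f_v(y) = y_v = 0$. With $x \wedge y$ denoting the componentwise meet in $\B^n$, monotonicity gives $f_v(x \wedge y) \leq f_v(x)$ and $f_v(x \wedge y) \leq f_v(y) = 0$, hence $f_v(x) = 1 > 0 = f_v(x \wedge y)$. Since $f_v$ reads only its in-neighbors, $x$ and $x \wedge y$ must disagree on some in-neighbor $u$ of $v$; but $x_u \neq x_u \wedge y_u$ forces $x_u = 1$ and $y_u = 0$, and the inequalities $\hat 0 \leq y$ and $x \leq \hat 1$ place $u$ in $I$, so $u \in A$. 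Symmetric arguments using the componentwise join $x \vee z$ handle $B$ and the componentwise meet $y \wedge z$ handle $C$. This yields three pairwise vertex-disjoint cycles in $G$, so $\nu(G) \geq 3$.

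The main subtlety is that $\fixe(f)$ is a lattice in its own right but is not, in general, a sublattice of $\B^n$, so $x \wedge y$ and $x \vee z$ have to be interpreted componentwise in $\B^n$ and are typically not themselves fixed points. The monotonicity argument therefore compares $f_v$ at a fixed point with $f_v$ at a non-fixed componentwise meet or join, and traces the strict inequality down to an in-neighbor that is forced to lie in the chosen label set. The design of $A, B, C$ corresponds to picking, for each of the three pairs among $\{x,y,z\}$, one of its two difference sets in the unique cyclic way that renders the three choices pairwise disjoint.
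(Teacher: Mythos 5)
Your proof is correct and follows essentially the same route as the paper's: the paper also reduces to showing that three pairwise incomparable fixed points force three vertex-disjoint cycles, using exactly the same cyclic difference sets (your $A,B,C$ are the paper's $V_1,V_3,V_2$ up to relabeling) together with the no-chain-of-size-4 bound from Lemma~\ref{lem:totalorder} and the lattice structure of $\fixe(f)$. Your detour through the componentwise meet $x\wedge y$ is a harmless variant of the paper's direct comparison $f_v(x^1)>f_v(x^2)$, which already yields an in-neighbor $u$ with $x^1_u>x^2_u$ by monotonicity of $f_v$ in its in-neighbors.
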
   

\begin{proof}
Let $f$ be a monotone network with interaction graph $G$. We first prove the following: {\em If $f$ has three pairwise incomparable fixed points, then $G$ has three vertex-disjoint cycles.} Let $x^1,x^2,x^3$ be three pairwise incomparable fixed points of $f$. Let 
\[
V_1=\{v:x^1_v>x^2_v\},\qquad 
V_2=\{v:x^2_v>x^3_v\},\qquad 
V_3=\{v:x^3_v>x^1_v\}.
\]
It is easy to see that $V_1$, $V_2$ and $V_3$ are pairwise disjoint and that, for $i=1,2,3$, the minimum in-degree of $G[V_i]$ is at least one. Thus $G$ has three vertex-disjoint cycles. Therefore, if $\nu\leq 2$, then $\fixe(f)$ has no antichain of size $3$ and no chain of size $4$ (cf. Lemma~\ref{lem:totalorder}). Since $\fixe(f)$ is a lattice, we deduce that $f$ has at most four fixed points (and this bound is sharp, as showed by the identity on $\B^2$). 
\end{proof}

\paragraph{Acknowledgements} We would like to thank an anonymous referee for his valuable comments. 

\bibliographystyle{plain}
\bibliography{BIB}

\begin{thebibliography}{10}

\bibitem{ANLY00}
R.~Ahlswede, Ning Cai, S.-Y.R. Li, and R.W. Yeung.
\newblock Network information flow.
\newblock {\em Information Theory, IEEE Transactions on}, 46(4):1204--1216, Jul
  2000.

\bibitem{A04}
R.~Albert.
\newblock Boolean modeling of genetic regulatory networks.
\newblock In Eli Ben-Naim, Hans Frauenfelder, and Zoltan Toroczkai, editors,
  {\em Complex Networks}, volume 650 of {\em Lecture Notes in Physics}, pages
  459--481. Springer Berlin Heidelberg, 2004.

\bibitem{A08}
J.~Aracena.
\newblock Maximum number of fixed points in regulatory {B}oolean networks.
\newblock {\em Bulletin of Mathematical Biology}, 70(5):1398--1409, 2008.

\bibitem{ARS14}
J.~Aracena, A.~Richard, and L.~Salinas.
\newblock Maximum number of fixed points in {AND-OR-NOT} networks.
\newblock {\em Journal of Computer and System Sciences}, 80(7):1175 -- 1190,
  2014.

\bibitem{B08}
S.~Bornholdt.
\newblock Boolean network models of cellular regulation: prospects and
  limitations.
\newblock {\em Journal of The Royal Society Interface}, 5(Suppl 1):S85--S94,
  2008.

\bibitem{E45}
Paul Erd{\"o}s.
\newblock On a lemma of littlewood and offord.
\newblock {\em Bulletin of the American Mathematical Society}, 51(12):898--902,
  1945.

\bibitem{EFK05}
P{\'e}ter~L Erdos, Zolt{\'a}n F{\"u}redi, and Gyula~OH Katona.
\newblock Two-part and k-sperner families: new proofs using permutations.
\newblock {\em SIAM Journal on Discrete Mathematics}, 19(2):489--500, 2005.

\bibitem{GRR15}
M.~{Gadouleau}, A.~{Richard}, and S.~{Riis}.
\newblock {Fixed points of Boolean networks, guessing graphs, and coding
  theory}.
\newblock {\em SIAM Journal on Discrete Mathematics}, 29(4):2312--2335, 2015.

\bibitem{GRF16}
Maximilien Gadouleau, Adrien Richard, and Eric Fanchon.
\newblock Reduction and fixed points of boolean networks and linear network
  coding solvability.
\newblock {\em IEEE Transactions on Information Theory}, 62(5):2504--2519,
  2016.

\bibitem{GR11}
Maximilien Gadouleau and S{\o}ren Riis.
\newblock Graph-theoretical constructions for graph entropy and network coding
  based communications.
\newblock {\em IEEE Transactions on Information Theory}, 57(10):6703--6717,
  2011.

\bibitem{GT11}
Bertrand Guenin and Robin Thomas.
\newblock Packing directed circuits exactly.
\newblock {\em Combinatorica}, 31(4):397--421, 2011.

\bibitem{H59}
Frank Harary.
\newblock On the measurement of structural balance.
\newblock {\em Behavioral Science}, 4(4):316--323, 1959.

\bibitem{HHS98}
Teresa~W Haynes, Stephen Hedetniemi, and Peter Slater.
\newblock {\em Fundamentals of domination in graphs}.
\newblock CRC Press, 1998.

\bibitem{H82}
J.~Hopfield.
\newblock Neural networks and physical systems with emergent collective
  computational abilities.
\newblock {\em Proc. Nat. Acad. Sc. U.S.A.}, 79:2554 -- 2558, 1982.

\bibitem{K69}
S.~A. Kauffman.
\newblock Metabolic stability and epigenesis in randomly connected nets.
\newblock {\em Journal of Theoretical Biology}, 22:437--467, 1969.

\bibitem{K93}
S.~A. Kauffman.
\newblock {\em Origins of Order Self-Organization and Selection in Evolution}.
\newblock Oxford University Press, 1993.

\bibitem{N15}
Nicolas Le~Nov{\`e}re.
\newblock Quantitative and logic modelling of molecular and gene networks.
\newblock {\em Nature Reviews Genetics}, 16:146--158, 2015.

\bibitem{L98}
Changwoo Lee.
\newblock Domination in digraphs.
\newblock {\em J. Korean Math. Soc}, 35(4):843--853, 1998.

\bibitem{MP43}
W.~S. Mac~Culloch and W.~S. Pitts.
\newblock A logical calculus of the ideas immanent in nervous activity.
\newblock {\em Bull. Math Bio. Phys.}, 5:113 -- 115, 1943.

\bibitem{M93}
William McCuaig.
\newblock Intercyclic digraphs.
\newblock {\em Contemporary Mathematics}, 147:203--245, 1993.

\bibitem{RRST95}
Bruce Reed, Neil Robertson, Paul Seymour, and Robin Thomas.
\newblock Packing directed circuits.
\newblock {\em Combinatorica}, 16(4):535--554, 1996.

\bibitem{RRT08}
E.~Remy, P.~Ruet, and D.~Thieffry.
\newblock Graphic requirements for multistability and attractive cycles in a
  {B}oolean dynamical framework.
\newblock {\em Advances in Applied Mathematics}, 41(3):335 -- 350, 2008.

\bibitem{R07}
S{\o}ren Riis.
\newblock Information flows, graphs and their guessing numbers.
\newblock {\em The Electronic Journal of Combinatorics}, 14:\#R44, 2007.

\bibitem{R86}
F.~Robert.
\newblock {\em Discrete iterations: a metric study}, volume~6 of {\em Series in
  Computational Mathematics}.
\newblock Springer, 1986.

\bibitem{S95}
Paul~D. Seymour.
\newblock Packing directed circuits fractionally.
\newblock {\em Combinatorica}, 15(2):281--288, 1995.

\bibitem{T73}
R.~Thomas.
\newblock {B}oolean formalization of genetic control circuits.
\newblock {\em Journal of Theoretical Biology}, 42(3):563 -- 585, 1973.

\bibitem{TA90}
R.~Thomas and R.~d'Ari.
\newblock {\em Biological Feedback}.
\newblock CRC Press, 1990.

\bibitem{TK01}
R.~Thomas and M.~Kaufman.
\newblock Multistationarity, the basis of cell differentiation and memory.
  {II}. {L}ogical analysis of regulatory networks in terms of feedback
  circuits.
\newblock {\em Chaos: An Interdisciplinary Journal of Nonlinear Science},
  11(1):180--195, 2001.

\bibitem{T88}
Cartsen Thomassen.
\newblock One the presence of disjoint subgraphs of specified type.
\newblock {\em Journal of Graph Theory}, 12:101--111, 1988.

\bibitem{Z82}
Thomas Zaslavsky.
\newblock Signed graphs.
\newblock {\em Discrete Applied Mathematics}, 4(1):47--74, 1982.

\bibitem{Z12}
Thomas Zaslavsky.
\newblock A mathematical bibliography of signed and gain graphs and allied
  areas.
\newblock {\em The Electronic Journal of Combinatorics}, 1000:DS8--Sep, 2012.

\end{thebibliography}

\end{document}